\let\columncolor\relax
\def\endarray{%
   \adl@endarray \egroup \adl@arrayrestore \egroup
   \gdef\@preamble{}\CT@end}
\def\@array{%
   \adl@everyvbox\everyvbox
   \everyvbox{\adl@arrayinit \the\adl@everyvbox \everyvbox\adl@everyvbox}%
   \global\let\adl@hdashline@bgcolorsaved\adl@hdashline@bgcolor
   \global\let\adl@hdashline@bgcolor\@empty
   \adl@array}
\let\@@array\@array
\def\expandafter\adl@arrayrestore\expandafter{%
   \adl@arrayrestore
   \global\let\adl@hdashline@bgcolor\adl@hdashline@bgcolorsaved}
\global\let\adl@hdashline@bgcolorsaved\@empty
\global\let\adl@hdashline@bgcolor\@empty
\def\adl@@colhtdp{%
   \ifdim\adl@height<\@tempdima \global\adl@height\@tempdima \fi
   \ifdim\adl@depth<\dp\z@  \global\adl@depth\dp\z@  \fi}
\def\adl@ihdashline[#1/#2]{%
   \adl@hline\adl@connect\arrayrulewidth
   \ifnum0=`{\fi}%
   \adl@hdashline@bgcolor\crcr\noalign{\vskip-\arrayrulewidth}%
   \multispan{\adl@columns}\unskip \adl@hcline\z@[#1/#2]%
   \noalign{\ifnum0=`}\fi
   \futurelet\@tempa\adl@xhline}
\def\@classz{%
   \@classx
   \@tempcnta\count@
   \prepnext@tok
   \expandafter\CT@extract\the\toks\@tempcnta\columncolor!\@nil
   \@addtopreamble{%
      \setbox\z@\hbox\bgroup\bgroup
         \ifcase\@chnum
            \hskip\stretch{.5}\kern\z@
            \d@llarbegin \insert@column \d@llarend
            \hskip\stretch{.5}%
         \or \d@llarbegin \insert@column \d@llarend \hfill
         \or \hfill\kern\z@ \d@llarbegin \insert@column \d@llarend
         \or $\vcenter \@startpbox{\@nextchar}\insert@column \@endpbox$%
         \or \vtop \@startpbox{\@nextchar}\insert@column \@endpbox
         \or \vbox \@startpbox{\@nextchar}\insert@column \@endpbox
         \fi
      \egroup\egroup
      \begingroup
      \CT@setup
      \CT@column@color
      \CT@row@color
      \CT@do@color
      \endgroup
      \@tempdima\ht\z@
      \advance\@tempdima\minrowclearance
      \adl@colhtdp
      \vrule\@height\@tempdima\@width\z@
      \unhbox\z@}%
   \adl@setup@bgcolor
   \prepnext@tok}
\def\adl@setup@bgcolor{%
   \xdef\adl@hdashline@bgcolor{%
      \adl@hdashline@bgcolor
      \ifx\adl@hdashline@bgcolor\@empty\else &\fi
      \omit
      \setbox\z@\hbox{}\ht\z@\arrayrulewidth
      \begingroup
      \CT@setup
      \hskip\@tempdimb
      \CT@column@color
      \CT@do@color\unskip
      \hskip\@tempdimc
      \endgroup
      \box\z@}}%
\def\multicolumn#1#2#3{%
   \multispan{#1}%
   \begingroup\begingroup
   \def\adl@arrayrule{\adl@mcarrayrule{#1}}%
   \def\adl@arraydashrule{\adl@mcarraydashrule{#1}}%
   \def\adl@argarraydashrule{\adl@mcargarraydashrule{#1}}%
   \let\@addamp\adl@mcaddamp
   \let\adl@setup@bgcolor\relax
   \@mkpream{#2}\@addtopreamble\@empty
   \global\let\adl@preamble\@preamble\endgroup
   \let\@preamble\adl@preamble
   \def\@sharp{#3}\let\protect\relax
   \adl@activatepbox
   \adl@preaminit
   \let\CT@row@color\relax
   \let\CT@column@color\relax
   \let\CT@do@color\relax
   \@arstrut \@preamble\hbox{}\endgroup
   \global\advance\adl@currentcolumn#1\ignorespaces}
\newtheorem{theo}{Theorem}[section]
\newtheorem{lemm}[theo]{Lemma}
\newtheorem{coro}[theo]{Corollary}
\theoremstyle{definition}
\newtheorem{defi}[theo]{Definition}
\newtheorem{exam}[theo]{Example}
\newtheorem{rema}[theo]{Remark}
\newcommand{\Spec}{\mathop{\mathrm{Spec}}\nolimits}
\newcommand{\tdeg}{\mathop{\mathrm{tr.deg}}\nolimits}
\newcommand{\Mat}{\mathop{\mathrm{Mat}}\nolimits}
\newcommand{\GL}{\mathop{\mathrm{GL}}\nolimits}
\newcommand{\dep}{\mathop{\mathrm{dep}}\nolimits}
\newcommand{\Li}{\mathop{\mathrm{Li}}\nolimits}
\newcommand{\red}{\mathop{\mathrm{red}}\nolimits}
\newcommand{\wt}{\mathop{\mathrm{wt}}\nolimits}
\newcommand{\mbf}{\mathbf{f}}
\newcommand{\mbm}{\mathbf{m}}
\newcommand{\Z}{\mathbb{Z}}
\newcommand{\Q}{\mathbb{Q}}
\newcommand{\R}{\mathbb{R}}
\newcommand{\LL}{\mathbb{L}}
\newcommand{\T}{\mathbb{T}}
\newcommand{\E}{\mathbb{E}}
\newcommand{\Fq}{\mathbb{F}_{q}}
\newcommand{\Gm}{\mathop{\mathbb{G}_{m}}\nolimits}
\newcommand{\Ga}{\mathop{\mathbb{G}_{a}}\nolimits}
\newcommand{\Qb}{\overline{\mathbb{Q}}}
\newcommand{\Kinf}{K_{\infty}}
\newcommand{\Kinfb}{\overline{K_{\infty}}}
\newcommand{\Cinf}{\mathbb{C}_{\infty}}
\newcommand{\Kb}{\overline{K}}
\newcommand{\Kbt}{\overline{K}^{\times}}
\newcommand{\Fqtb}{\overline{\mathbb{F}_{q}(t)}}
\newcommand{\aub}{\underline{\alpha}}
\newcommand{\uval}{|\!|u|\!|_{\infty}}
\newcommand{\uival}{|\!|u_{i}|\!|_{\infty}}
\newcommand{\ujval}{|\!|u_{j}|\!|_{\infty}}
\newcommand{\uub}{\underline{u}}
\newcommand{\nub}{\underline{n}}
\newcommand{\zn}{\zeta(\nub)}
\newcommand{\vp}{\varphi}
\newcommand{\pit}{\widetilde{\pi}}
\newcommand{\Psit}{\widetilde{\Psi}}
\newcommand{\nq}{|\theta|_{\infty}^{\frac{n q}{q-1}}}
\newcommand{\niq}{|\theta|_{\infty}^{\frac{n_{i} q}{q-1}}}
\newcommand{\Ker}{\mathop{\mathrm{Ker}}\nolimits}
\title{On algebraic independence of certain multizeta values in characteristic $p$}
\author{Yoshinori Mishiba\thanks{
Graduate School of Mathematics, Kyushu University, 744, Motooka, Nishi-ku, Fukuoka, 819-0395, JAPAN.
\endgraf e-mail: \texttt{y-mishiba@math.kyushu-u.ac.jp}
\endgraf Supported by the JSPS Research Fellowships for Young Scientists.
}}
\date{}
\begin{document}
\maketitle

\begin{abstract}
In this paper, we study multizeta values over function fields in characteristic $p$.
For each $d \geq 2$, we show that when the constant field has cardinality $> 2$,
the field generated by all multizeta values of depth $d$ is of infinite transcendence degree over the field
generated by all single zeta values.
As a special case, this gives an affirmative answer to the function field analogue of a question of Y.\ Andr\'e.
\end{abstract}

\section{Introduction}
\subsection{Classical case}
The multiple zeta values (MZVs) in characteristic 0 was defined by Euler (depth two) and Hoffman (higher depth).
For a $d$-tuple of positive integers $\nub = (n_{1}, \dots, n_{d}) \in (\Z_{\geq 1})^{d}$ with $n_{1} \geq 2$, it is defined by
$$\zeta_{\Z}(\nub) = \zeta_{\Z}(n_{1}, \dots, n_{d}) := \sum_{m_{1} > \cdots > m_{d} \geq 1} \frac{1}{m_{1}^{n_{1}} \cdots m_{d}^{n_{d}}} \in \R^{\times}.$$
The sum $\wt(\nub) := \sum_{i} n_{i}$ is called the weight and $\dep(\nub) := d$ is called the depth of $\zeta_{\Z}(\nub)$.
One of the goals of this topic is to determine all algebraic relations over $\Qb$ among the MZVs.
Although many relations among MZVs are known, very few linear/algebraic independence results on MZVs are known.
For example, Euler proved that when $d = 1$, the ratio $\zeta_{\Z}(n) / (2 \pi \sqrt{-1})^{n}$ is a rational number if and only if $n \geq 2$ is an even integer.
However, we do not know whether $\zeta_{\Z}(n) / \pi^{n}$ is a transcendental number for each odd integer $n \geq 3$.
It is conjectured that $\pi, \zeta_{\Z}(3), \zeta_{\Z}(5), \zeta_{\Z}(7), \dots$ are algebraically independent over $\Qb$.
For the higher depth case, Goncharov (\cite{Gonc}) conjectured that MZVs of different weights are linearly independent over $\Q$.
Moreover, it is considered that this conjecture is true even if we replace $\Q$ by $\Qb$.
Andr\'e (\cite[p. 231]{Andr}) asked whether there exists $\nub$ such that $\zeta_{\Z}(\nub) \not\in \Q[\zeta_{\Z}(2), \zeta_{\Z}(3), \zeta_{\Z}(4), \zeta_{\Z}(5), \dots]$.
In this paper, a consequence of our main result is to give an affirmative answer to the function field analogue of this question.

\subsection{Positive characteristic multizeta values}
Let $K := \Fq(\theta)$ be the rational function field over the finite field of $q$ elements with variable $\theta$,
$p$ the characteristic of $K$,
$\Kinf := \Fq(\!(\theta^{-1})\!)$ the $\infty$-adic completion of $K$,
$\Kinfb$ a fixed algebraic closure of $\Kinf$,
$\Cinf$ the $\infty$-adic completion of $\Kinfb$,
and $\Kb$ the algebraic closure of $K$ in $\Cinf$.
We fix a ($q-1$)-st root of $-\theta$ and let
$$\pit := (-\theta)^{\frac{q}{q-1}} \prod_{i=1}^{\infty}\left(1-\theta^{1-q^i}\right)^{-1} \in (-\theta)^{\frac{1}{q-1}} \cdot \Kinf^{\times}$$
be the fundamental period of the Carlitz module.
This is a generator of the kernel of the exponential map of the Carlitz module
and a function field analogue of $2 \pi \sqrt{-1}$ which is a generator of the kernel of the usual exponential map.
As $\# \Fq[\theta]^{\times} = q-1$, we say that an integer $n$ is ``odd'' if $q-1$ does not divide $n$, and ``even'' if $q-1$ divides $n$.

In this paper, an \textit{index} means an element of $(\Z_{\geq 1})^{d}$ for some positive integer $d \geq 1$.
Thakur (\cite{Tha1}) defined the {\it positive characteristic multizeta values} (also denoted by MZVs) by
$$\zn = \zeta(n_{1}, \dots, n_{d}) := \sum \frac{1}{a_{1}^{n_{1}} \cdots a_{d}^{n_{d}}} \in \Kinf^{\times}$$
for indices $\nub = (n_{1}, \dots, n_{d})$, where the sum is over all monic polynomials $a_{i}$ in $\Fq[\theta]$ such that
$\deg a_{1} > \cdots > \deg a_{d} \geq 0$.
Thakur (\cite{Tha2}) showed that MZVs are non-zero.
Anderson and Thakur (\cite{AT09}) showed that the MZVs have an interpretation as periods of certain $t$-motives,
which are iterated extensions of the trivial $t$-motive by Carlitz motives.
It is clear that $\zeta(p^{e} \nub) = \zn^{p^{e}}$ for all $e \geq 0$, where we set $p^{e} \nub := (p^{e} n_{1}, \dots, p^{e} n_{d})$.
Carlitz (\cite{Carl}) showed that $\zeta(n) / \pit^{n} \in K^{\times}$ if and only if $n \geq 1$ is an ``even'' integer.
This is an analogue of the relations of the special zeta values at even integers.
Yu (\cite{Yu1}) proved that $\zeta(n) / \pit^{n} \not\in \Kb$ for each ``odd'' integer $n \geq 1$.
Moreover, Chang and Yu (\cite{ChYu}) proved that the elements $\pit, \zeta(n_{1}), \dots, \zeta(n_{d})$ are algebraically independent over $\Kb$
if $n_{i}$ is ``odd'' for each $i$ and $n_{i} / n_{j}$ is not an integral power of $p$ for each $i \neq j$.
Chang, Papanikolas and Yu (\cite{CPY}) also showed the algebraic independence of MZVs when the constant field $\Fq$ varies.
Several results on the higher depth case were also proved.
Chang (\cite{Cha2}) showed that MZVs of different weights are linearly independent over $\Kb$.
In \cite{Mish}, we proved that $\pit$, $\zeta(n)$ and $\zeta(n,n)$ are algebraically independent over $\Kb$ if $2n$ is ``odd''.
In this paper, we prove the following theorem:
\begin{theo}\label{main_zeta}
Let $d \geq 1$ be a positive integer, and let $n_{1}, \dots, n_{d} \geq 1$ be $d$ distinct positive integers.
If $n_{i}$ is not divisible by $q-1$ for each $i$ and $n_{i} / n_{j}$ is not an integral power of $p$ for each $i \neq j$,
then the the following $1 + \frac{d(d+1)}{2}$ elements
$$
\begin{array}{c}
\pit, \ \ \ \ \zeta(n_{1}), \ \ \ \ \zeta(n_{2}), \ \ \ \ \zeta(n_{3}), \ \ \ \ \zeta(n_{4}), \ \dots \dots \dots, \ \zeta(n_{d}), \ \ \ \ \ \ \ \ \\
\zeta(n_{1}, n_{2}), \ \ \zeta(n_{2}, n_{3}), \ \ \zeta(n_{3}, n_{4}), \ \dots \dots, \ \zeta(n_{d-1}, n_{d}), \ \\
\zeta(n_{1}, n_{2}, n_{3}), \zeta(n_{2}, n_{3}, n_{4}), \dots, \zeta(n_{d-2}, n_{d-1}, n_{d}), \\
\vdots \\
\zeta(n_{1}, n_{2}, \dots, n_{d-1}), \zeta(n_{2}, n_{3}, \dots, n_{d}), \\
\zeta(n_{1}, n_{2}, \dots, n_{d})
\end{array}$$
are algebraically independent over $\Kb$.
\end{theo}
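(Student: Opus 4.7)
My plan is to apply the Anderson--Thakur realization of MZVs as periods of $t$-motives together with Papanikolas' Galois-theoretic transcendence criterion: $\tdeg_{\Kb} \Kb(\text{periods of } M) = \dim \GaM$. Thus it suffices to build a single $t$-motive $M$ whose period field contains the $1 + \frac{d(d+1)}{2}$ elements of the statement, and to show $\dim \GaM \geq 1 + \frac{d(d+1)}{2}$.

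For each $j = 1, \dots, d$, Anderson and Thakur (\cite{AT09}) associate to the initial segment $(n_{1}, \dots, n_{j})$ an iterated-extension $t$-motive whose periods include, up to explicit factors of $\pit$, all the ``suffix'' MZVs $\zeta(n_{l}, n_{l+1}, \dots, n_{j})$ for $l = 1, \dots, j$. I would then form the direct sum $M$ of these $d$ motives; since every consecutive sub-tuple $(n_{i}, \dots, n_{i+k-1})$ appearing in the theorem arises as the $l = i$ suffix of the initial segment of length $j = i + k - 1$, the period field of $M$ contains $\pit$ together with every MZV in the list.

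The Galois group $\GaM$ fits into a short exact sequence $1 \to U \to \GaM \to \Gm \to 1$, where the quotient $\Gm$ acts on the Carlitz direct summand by scaling $\pit$ and $U$ is unipotent of dimension at most $\frac{d(d+1)}{2}$. The theorem reduces to proving maximality $\dim U = \frac{d(d+1)}{2}$. I would argue by induction on depth $k$. The base case $k = 1$ is the Chang--Yu theorem (\cite{ChYu}), which under our exact hypotheses yields algebraic independence of $\pit, \zeta(n_{1}), \dots, \zeta(n_{d})$. For the inductive step I quotient $M$ by the sub-$t$-motive supporting all periods of depth $< k$, so that it suffices to show that the $d - k + 1$ new extension classes contributed by the depth-$k$ MZVs $\zeta(n_{i}, \dots, n_{i+k-1})$ remain linearly independent modulo the contributions from lower depth. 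Using the ABP criterion of Anderson--Brownawell--Papanikolas, any hypothetical $\Kb$-linear relation translates into a functional equation for the Anderson generating functions attached to the depth-$k$ tuples, from which, by evaluating at $t = \theta^{q^{s}}$ and comparing $\infty$-adic valuations, one should extract a non-trivial algebraic relation at lower depth, contradicting the inductive hypothesis.

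The main obstacle is precisely this induction step. Unlike the depth-one Chang--Yu setting, the overlapping consecutive windows $(n_{i}, \dots, n_{i+k-1})$ and $(n_{i+1}, \dots, n_{i+k})$ share $k - 1$ indices, and thereby generate potential ``shifting'' relations that are absent when one fixes a single tuple. Ruling them out should require combining the weight filtration with the standing hypotheses in an essential way: the ``oddness'' assumption blocks the constant-field obstructions that would otherwise enlarge $\mathrm{Ext}^{1}$ beyond what $U$ can accommodate, while the non-$p$-power ratio hypothesis rules out Frobenius-twist identifications between distinct depth-$k$ blocks. I expect the technical heart of the paper to lie in setting up the Frobenius-difference modules that organize all these extension classes simultaneously, so that the inductive step becomes a tractable computation of a determinant or the rank of a matrix of hyperderivatives.
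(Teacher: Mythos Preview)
Your global framework is correct and matches the paper: realize the MZVs via Anderson--Thakur as periods of a pre-$t$-motive, apply Papanikolas' theory so that the problem becomes $\dim G_{M} = 1 + \tfrac{d(d+1)}{2}$, reduce the depth-one base case to Chang--Yu, and then handle the higher-depth contributions inductively. So far so good.

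The gap is the induction step. You propose to induct on the depth $k$, adding all $d-k+1$ new periods at once, and to detect independence of the new extension classes by an ABP/valuation argument (``evaluating at $t=\theta^{q^{s}}$ and comparing $\infty$-adic valuations''). This is only a heuristic; you have not shown, nor is it clear, that a putative relation among the depth-$k$ periods produces a usable functional equation whose specializations force a lower-depth relation. The overlapping windows you yourself flag are precisely what makes a depth-by-depth analytic argument delicate, and nothing you wrote actually uses the hypotheses on the $n_i$ beyond the depth-one step.

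The paper's argument is entirely different and purely group-theoretic. It orders the index set $I_{d}=\{(i,j):1\le j<i\le d+1\}$ by depth and then left-to-right, and inducts one coordinate at a time: at stage $(k,\ell)$ the only new variable is $x_{k\ell}$. One has the diagram
\[
\xymatrix{
1 \ar[r] & V \ar[r]\ar[d]^{\psi|_V} & G(k,\ell) \ar[r]^{\pi}\ar@{>>}[d]^{\psi} & \Gm \ar[r]\ar@{=}[d] & 1\\
1 \ar[r] & V' \ar[r] & G(k',\ell') \ar[r]^{\pi'} & \Gm \ar[r] & 1
}
\]
and assumes for contradiction that $\dim G(k,\ell)=\dim G(k',\ell')$, so $\Ker(\psi|_V)$ is zero-dimensional; since it is $\Gm$-stable with nontrivial weight, a trivial lemma forces $\Ker(\psi|_V)=\{1\}$. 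The decisive point is that for $\dep(k,\ell)\ge 2$ the unipotent radical $V$ is \emph{non-abelian}: choosing $X\in V$ arbitrary and $A\in V$ with a single nonzero off-diagonal entry $a_{k,\ell+1}=1$, one computes the commutator $X^{-1}A^{-1}XA$ and finds that all its entries vanish except the $(k,\ell)$-entry, which equals $-x_{\ell+1,\ell}$. Since this commutator lies in $\Ker(\psi|_V)=\{1\}$ one gets $x_{\ell+1,\ell}=0$ for all $X$, contradicting the inductive hypothesis that the depth-one coordinate $x_{\ell+1,\ell}$ is free. No ABP, no valuations, no hyperderivatives; the hypotheses on the $n_i$ are used only in the depth-one base case via Chang--Yu.

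So your plan is right in spirit but misses the key mechanism: replace the depth-by-depth analytic step with a one-coordinate-at-a-time induction and a commutator computation inside the unipotent radical.
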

Theorem \ref{main_zeta} provides many MZVs which are algebraically independent over $\Kb$.
The next theorem gives a positive answer to the function field analogue of a question in \cite[p. 231]{Andr}.
\begin{theo}
For each positive integer $d \geq 1$, we set $K_{d}$ to be the field generated by the MZVs of depth 1 or $d$ over $K$.
When $q \neq 2$, we have
$$\tdeg_{K_{1}} K_{d} = \infty$$
for each $d \geq 2$.
\end{theo}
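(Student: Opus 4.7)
The plan is to reduce the statement to Theorem~\ref{main_zeta} by exhibiting, for each $N$, an algebraically independent family of $N$ depth-$d$ MZVs over $K_{1}$. Since the extension $\Kb/K$ is algebraic, one has $\tdeg_{K_{1}} K_{d} = \tdeg_{K_{1} \cdot \Kb}(K_{d} \cdot \Kb)$, so I work throughout over the base field $\Kb$.

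First I would describe $K_{1} \cdot \Kb$ explicitly. The Carlitz relation $\zeta(n) \in K \pit^{n}$ for ``even'' $n$ and the Frobenius identity $\zeta(p^{e} n) = \zeta(n)^{p^{e}}$ together reduce every depth-one MZV to $\pit$ and to values $\zeta(m)$ with $m$ ``odd'' and $p \nmid m$. Setting $S_{0} := \{m \geq 1 \mid q-1 \nmid m,\ p \nmid m\}$, this gives
$$K_{1} \cdot \Kb = \Kb\bigl(\pit,\ \{\zeta(m) \mid m \in S_{0}\}\bigr),$$
and Chang-Yu's theorem recalled in the introduction implies these generators are algebraically independent over $\Kb$. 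Moreover $S_{0}$ is infinite since $q \neq 2$: the Chinese Remainder Theorem produces infinitely many $m$ coprime to $p$ with nonzero residue modulo $q-1$.

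Enumerate $S_{0} = \{t_{1} < t_{2} < \cdots\}$ and, for each $k \geq 1$, set $\underline{m}^{(k)} := (t_{k}, t_{k+1}, \ldots, t_{k+d-1})$. I claim the MZVs $\zeta(\underline{m}^{(k)})$, $k \geq 1$, are algebraically independent over $K_{1} \cdot \Kb$, which at once gives the theorem. For the claim, fix $N \geq 1$ and suppose some nontrivial polynomial relation held among $\zeta(\underline{m}^{(1)}), \dots, \zeta(\underline{m}^{(N)})$ over $K_{1} \cdot \Kb$; its coefficients would then lie in $\Kb(\pit, \zeta(m) \mid m \in F)$ for some finite $F \subset S_{0}$. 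I would build an ordered tuple $(a_{1}, \dots, a_{D})$ of pairwise distinct elements of $S_{0}$ such that (a) $(a_{j_{0}+1}, \dots, a_{j_{0}+N+d-1}) = (t_{1}, \dots, t_{N+d-1})$ for some offset $j_{0}$, and (b) every element of $F$ appears among the $a_{i}$, with those outside $\{t_{1}, \dots, t_{N+d-1}\}$ placed strictly outside the block $[j_{0}+1, j_{0}+N+d-1]$. Because every entry lies in $S_{0}$, any two are ``odd'' with no $p$-power ratio; hence Theorem~\ref{main_zeta} applies, yielding algebraic independence over $\Kb$ of $\pit$, all $\zeta(a_{i})$, and every length-$d$ consecutive-substring MZV of the tuple. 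By (a) this independent set contains each $\zeta(\underline{m}^{(k)})$ for $k = 1, \dots, N$, and by (b) the $\zeta(a_{i})$ cover all $\zeta(m)$ with $m \in F$ -- a contradiction.

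The main obstacle has already been paid for in proving Theorem~\ref{main_zeta}; the remaining step is the combinatorial arrangement of one index tuple that simultaneously realizes every target depth-$d$ MZV as a length-$d$ consecutive substring and absorbs every auxiliary depth-one MZV from the hypothetical relation. Once this arrangement is in place, a single application of Theorem~\ref{main_zeta} produces the contradiction.
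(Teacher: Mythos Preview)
Your argument is correct and follows essentially the same route as the paper's: enumerate the infinite set $S_{0}=\{m\geq 1\mid q-1\nmid m,\ p\nmid m\}$, use that $K_{1}\cdot\Kb$ is generated by $\pit$ and the $\zeta(m)$ with $m\in S_{0}$, and then apply Theorem~\ref{main_zeta} to a single long tuple of distinct elements of $S_{0}$ that simultaneously contains the target depth-$d$ MZVs as consecutive substrings and absorbs the finitely many depth-one values occurring in a hypothetical relation.

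The only noteworthy difference is organisational. The paper picks \emph{disjoint} blocks $\zeta(n_{1},\dots,n_{d}),\ \zeta(n_{d+1},\dots,n_{2d}),\dots$, which are already consecutive substrings of the single increasing enumeration $(n_{1},n_{2},\dots)$; so no separate ``combinatorial arrangement'' step is needed --- one just takes a long enough initial segment and invokes Theorem~\ref{main_zeta}. Your overlapping windows $\underline{m}^{(k)}=(t_{k},\dots,t_{k+d-1})$ work equally well but force the extra bookkeeping of placing the auxiliary indices from $F$ outside the central block. Both reach the goal; the paper's version is a line shorter. Your treatment of $\pit$ (noting that $\pit^{q-1}\in K_{1}$ and hence $\pit\in K_{1}\cdot\Kb$) is in fact a bit more careful than the paper's terse claim $K_{1}=K(\zeta(n_{1}),\zeta(n_{2}),\dots)$.
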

\begin{proof}
Since $q \neq 2$, the set $\Z_{\geq 1} \smallsetminus ((q-1) \Z_{\geq 1} \cup p \Z_{\geq 1})$ is an infinite set.
We denote the elements of this set by $n_{1}, n_{2}, n_{3}, \dots$.
Hence we have $K_{1} = K(\zeta(n_{1}), \zeta(n_{2}), \zeta(n_{3}), \dots)$.
By Theorem \ref{main_zeta}, the elements
$\zeta(n_{1}, \dots, n_{d}), \zeta(n_{d + 1}, \dots, n_{2 d}), \zeta(n_{2 d + 1}, \dots, n_{3 d}), \dots$
are algebraically independent over $K_{1}$.
\end{proof}

\begin{rema}
$(1)$ Similarly, we can prove that for any integers $d_{1}, d_{2}, d_{3}, \dots \geq 2$,
there exist indices $\nub_{1}, \nub_{2}, \nub_{3}, \dots$ such that
$\dep(\nub_{j}) = d_{j}$ for each $j$ and $\zeta(\nub_{1}), \zeta(\nub_{2}), \zeta(\nub_{3}), \dots$ are algebraically independent over $K_{1}$.

$(2)$ When $q = 2$, Chang (\cite{Cha2}) showed that either $\zeta(1, 2)$ or $\zeta(2, 1)$ is transcendental over $K_{1}$.
However we do not know whether there exist infinitely many MZVs which are algebraically independent over $K_{1}$ when $q = 2$.
\end{rema}

By Theorem \ref{main_zeta}, we may obtain some lower bounds of the dimension of the vector space over $K$ (or $\Kb$)
spanned by the MZVs of fixed weight.
We do not pursue this problem in this paper and content ourselves with stating the following lower bound
of the transcendental degree of the field generated by the MZVs of bounded weights,
which is easily obtained from Theorem \ref{main_zeta}:
\begin{coro}
Let $w \geq 1$ be a positive integer.
If there exist positive integers $d_{1}, \dots, d_{r} \geq 1$
and an ``odd'' positive integer $n_{i j} \geq 1$ for each $1 \leq i \leq r$ and $1 \leq j \leq d_{i}$
such that $n_{i j} / n_{i' j'}$ is not an integral power of $p$ for each $(i, j) \neq (i', j')$
and $\sum_{j} n_{i j} \leq w$ for each $i$,
then we have
$$\tdeg_{\Kb} \Kb(\pit, \zeta(\nub) | \wt(\nub) \leq w) \geq 1 + \sum_{i=1}^{r} \frac{d_{i} (d_{i} + 1)}{2}.$$
\end{coro}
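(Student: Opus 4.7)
The plan is to reduce directly to Theorem \ref{main_zeta} after suitably ordering the hypothesized integers. Set $D := \sum_{i=1}^{r} d_{i}$, and concatenate the $r$ given tuples into a single list $(m_{1}, \dots, m_{D})$, arranging the entries so that those arising from group $i$ appear in consecutive positions (in, say, the original intra-group order $n_{i1}, \dots, n_{id_{i}}$). By hypothesis every $m_{k}$ is ``odd,'' the $m_{k}$ are pairwise distinct, and no ratio $m_{k}/m_{\ell}$ (for $k \neq \ell$) is an integral power of $p$, so the tuple $(m_{1}, \dots, m_{D})$ satisfies the hypotheses of Theorem \ref{main_zeta}.

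Applying Theorem \ref{main_zeta} then gives that $\pit$ together with the $D(D+1)/2$ MZVs of the form $\zeta(m_{k}, m_{k+1}, \dots, m_{k+\ell-1})$ (for $1 \leq k$ and $k+\ell-1 \leq D$) are algebraically independent over $\Kb$. I would then restrict attention to the subset of those MZVs whose consecutive arguments all originate from a single group. For each $i$, this selection produces exactly $d_{i}(d_{i}+1)/2$ MZVs, and the weight of each is bounded above by $\sum_{j=1}^{d_{i}} n_{ij} \leq w$; so every such MZV lies in the field $\Kb(\pit, \zeta(\nub) \mid \wt(\nub) \leq w)$.

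Since any subset of an algebraically independent set is again algebraically independent, the collection consisting of $\pit$ together with the selected $\sum_{i=1}^{r} d_{i}(d_{i}+1)/2$ MZVs yields $1 + \sum_{i=1}^{r} d_{i}(d_{i}+1)/2$ algebraically independent elements inside $\Kb(\pit, \zeta(\nub) \mid \wt(\nub) \leq w)$, which is exactly the claimed lower bound on the transcendence degree. There is essentially no obstacle beyond Theorem \ref{main_zeta} itself; the only observation is the role of the ordering, which ensures that the retained sub-collection of MZVs --- those with arguments confined to a single group --- automatically has weight at most $w$, whereas MZVs whose consecutive arguments span multiple groups might violate the weight bound and have to be discarded.
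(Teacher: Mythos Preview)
Your argument is correct and is precisely the derivation the paper has in mind: it states that the corollary ``is easily obtained from Theorem \ref{main_zeta}'' and gives no further proof, and your concatenation-then-restriction argument is the natural way to read that remark. The only implicit step you use without comment is that the $m_k$ are pairwise distinct, but this is immediate since $m_k = m_\ell$ would give $m_k/m_\ell = p^0$, contradicting the hypothesis.
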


\subsection{Carlitz multiple polylogarithms}
In \cite{Cha2}, Chang defined the {\it Carlitz multiple polylogarithms} (CMPLs) by
$$\Li_{\nub}(z_{1}, \dots, z_{d}) := \sum_{i_{1} > \cdots > i_{d} \geq 0}
\frac{z_{1}^{q^{i_{1}}} \cdots z_{d}^{q^{i_{d}}}}
{((\theta-\theta^{q}) \cdots (\theta-\theta^{q^{i_{1}}}))^{n_{1}} \cdots ((\theta-\theta^{q}) \cdots (\theta-\theta^{q^{i_{d}}}))^{n_{d}}}$$
for indices $\nub$.
It converges if $|z_{i}|_{\infty} < \niq$ for each $i$, where $|-|_{\infty}$ is an $\infty$-adic valuation on $\Cinf$.
The weight and depth of (values of) CMPLs are also defined to be $\wt(\nub)$ and $\dep(\nub)$.
In \cite{AT90}, Anderson and Thakur showed that $\zeta(n)$ is described as a $K$-linear combination of the values of CMPLs
of weight $n$ and depth one at rational points for each $n \geq 1$.
Moreover, in \cite{Cha2}, Chang showed that for each index $\nub$ with $\wt(\nub) = w$ and $\dep(\nub) = d$,
$\zeta(\nub)$ is described as a $K$-linear combination of the values of CMPLs of weight $w$ and depth $d$ at rational points.
He also proved that CMPLs take non-zero values when $z_{i} \neq 0$ for each $i$.
We are interested in the algebraic independence of their values at algebraic points over $\Kb$.

Let $n \geq 1$ be a positive integer, and let $\alpha_{1}, \dots, \alpha_{r} \in \Kbt$ be algebraic points such that $|\alpha_{j}|_{\infty} < \nq$ for each $j$.
Papanikolas (\cite{Papa}), Chang and Yu (\cite{ChYu}) proved that
if $\pit^{n}, \Li_{n}(\alpha_{1}), \dots, \Li_{n}(\alpha_{r})$ are linearly independent over $K$,
then they are algebraically independent over $\Kb$.
Let $n_{1}, \dots, n_{d} \geq 1$ be positive integers such that $n_{i} / n_{j}$ is not an integral power of $p$ for each $i \neq j$.
For each $i$, let $\alpha_{i 1}, \dots, \alpha_{i r_{i}} \in \Kbt$ be algebraic points such that $|\alpha_{i j}|_{\infty} < \niq$ for each $j$.
Chang and Yu (\cite{ChYu}) also proved that
if $\pit^{n_{i}}, \Li_{n_{i}}(\alpha_{i 1}), \dots, \Li_{n_{i}}(\alpha_{i r_{i}})$ are linearly independent over $K$ for each $i$,
then the elements of $\{ \pit \} \cup \{ \Li_{n_{i}}(\alpha_{i j}) | i, j \}$ are algebraically independent over $\Kb$.
As in the case of the MZVs, several results on the higher depth case were also proved.
Chang (\cite{Cha2}) showed that values of Carlitz multiple polylogarithms at algebraic points of different weights are linearly independent over $\Kb$.
In \cite{Mish}, we proved that $\pit$, $\Li_{n}(\alpha)$ and $\Li_{n,n}(\alpha,\alpha)$ are algebraically independent over $\Kb$
if $2n$ is ``odd'' and $\alpha \in K^{\times}$ with $|\alpha|_{\infty} < \nq$.
In this paper, we prove the following theorem:
\begin{theo}\label{main_li}
Let $d \geq 1$ be a positive integer, and let $n_{1}, \dots, n_{d} \geq 1$ be $d$ distinct positive integers.
For each $i$, we take a rational point $\alpha_{i} \in K^{\times}$ such that $|\alpha_{i}|_{\infty} < \niq$.
If $n_{i}$ is not divisible by $q-1$ for each $i$ and $n_{i} / n_{j}$ is not an integral power of $p$ for each $i \neq j$,
then the cardinality of the set $\{ \pit \} \cup \{ \Li_{n_{\ell}, n_{\ell+1}, \dots, n_{k}}(\alpha_{\ell}, \alpha_{\ell+1}, \dots, \alpha_{k}) | 1 \leq \ell \leq k \leq d \}$
is $1 + \frac{d(d+1)}{2}$ and all elements of this set are algebraically independent over $\Kb$.
\end{theo}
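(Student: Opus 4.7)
The plan is to reduce the transcendence assertion to a dimension computation via Papanikolas's theorem, which identifies the transcendence degree of a family of Carlitz periods with the dimension of the motivic Galois group of an associated $t$-motive. For each pair $(\ell, k)$ with $1 \le \ell \le k \le d$, I would first construct a $t$-motive $M_{\ell, k}$ defined over $\Kb$, built as an iterated extension of the Carlitz tensor powers $C^{\otimes n_{\ell}}, C^{\otimes n_{\ell+1}}, \dots, C^{\otimes n_{k}}$ by the trivial motive, whose period matrix contains both $\pit$ and $\Li_{n_{\ell}, \dots, n_{k}}(\alpha_{\ell}, \dots, \alpha_{k})$. Such motives are available from the constructions of Anderson--Thakur and Chang--Papanikolas--Yu, since the $\alpha_{i}$ satisfy the required growth condition $|\alpha_{i}|_{\infty} < \niq$.

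Setting $M := \bigoplus_{1 \le \ell \le k \le d} M_{\ell, k}$ and writing $\Gamma_{M}$ for its motivic Galois group, Papanikolas's theorem reduces Theorem \ref{main_li} to proving $\dim \Gamma_{M} \ge 1 + d(d+1)/2$. The reductive part of $\Gamma_M$ is governed by the Carlitz subquotients $C^{\otimes n_{1}}, \dots, C^{\otimes n_{d}}$ occurring in $M$: the hypothesis that $n_{i}/n_{j}$ is never an integral power of $p$ makes these motives pairwise non-isogenous, and the ``odd''-ness hypothesis on each $n_{i}$ ensures via the Chang--Yu theorem cited earlier that the reductive quotient of $\Gamma_{M}$ is a single $\Gm$ acting with weights $(n_{1}, \dots, n_{d})$, accounting for $\pit$ and contributing dimension $1$. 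It then remains to prove that the unipotent radical $V$ of $\Gamma_{M}$ has dimension exactly $d(d+1)/2$.

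I would bound $\dim V$ from below by a Hardouin-style argument on extension classes. Each $M_{\ell, k}$ produces a class in an $\mathrm{Ext}^{1}$ of $t$-motives whose rank-one subquotients are Carlitz tensor powers, and $V$ embeds into the direct sum of these extension groups, whose total dimension over the appropriate base equals $d(d+1)/2$ by a direct count of pairs $(\ell, k)$. The main obstacle --- and the technical heart of the paper --- will be showing this embedding is an \emph{isomorphism}: that no non-trivial $\Gm$-equivariant linear combination of the $(\ell, k)$-extension classes splits. Any such relation would produce a proper sub-$t$-motive of $M$ exhibiting an unexpected period identity, and I would rule it out by first projecting to the depth-one constituents $M_{\ell, \ell}$ --- where ``odd''-ness of $n_{\ell}$ plus Yu's transcendence theorem force all depth-one coefficients to vanish --- and then inducting on the length $k - \ell + 1$ of the index, extending the method of the $d = 2$ case treated in \cite{Mish}. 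Once $\dim V = d(d+1)/2$ is secured, the $1 + d(d+1)/2$ listed elements are algebraically independent over $\Kb$ and hence pairwise distinct, yielding both the cardinality claim and the main assertion; as a corollary, Theorem \ref{main_zeta} follows since each MZV is a $K$-linear combination of such CMPL values at rational points.
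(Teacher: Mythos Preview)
Your overall strategy --- package the CMPL values as periods of a direct sum of iterated-extension $t$-motives, invoke Papanikolas's theorem to reduce to a dimension count for the Galois group $\Gamma_M$, separate off the $\Gm$-quotient via Chang--Yu, and then show the unipotent radical has the full dimension $d(d+1)/2$ --- is exactly the architecture the paper uses. The divergence is in how the unipotent radical is handled.

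The paper does not work with extension classes in an $\mathrm{Ext}^1$ picture. Instead it totally orders the index set $I_d = \{(i,j): 1 \le j < i \le d+1\}$ (depth first, then position) and runs an induction: at each step one has a surjection $G(k,\ell) \twoheadrightarrow G(k',\ell')$ whose kernel sits inside a single $\Ga$, and one must show this kernel is nontrivial. If it were trivial, Lemma~\ref{V=0} (a $\Gm$-weight argument) forces it to be zero on the nose; the contradiction then comes from an explicit \emph{commutator computation} inside the unipotent radical: for suitable $X, A \in V$ one finds that $X^{-1}(A^{-1}XA)$ lies in $\Ker(\psi|_V)=\{0\}$ yet has a nonzero depth-one coordinate $x_{\ell+1,\ell}$. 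This works precisely because $V$ is a \emph{non-abelian} unipotent group of nilpotency class $d$.

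Your ``Hardouin-style'' framing is where I would push back. Hardouin's criterion and the $\mathrm{Ext}^1$ picture apply cleanly when the unipotent radical is a vector group; here, for $d \ge 2$, $V$ is genuinely non-abelian, and the iterated extensions $M_{\ell,k}$ with $k>\ell$ do not give elements of an $\mathrm{Ext}^1$ in any straightforward sense --- they correspond to deeper strata of the lower-central-series filtration. The sentence ``$V$ embeds into the direct sum of these extension groups'' conflates $V$ with its associated graded. What you actually need is that $V$ equals the full lower-triangular unipotent group, and ruling out a drop at depth $e+1$ given fullness at depth $\le e$ is exactly what the commutator trick supplies --- a mechanism your outline does not name. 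Absent that, ``inducting on the length $k-\ell+1$, extending the method of the $d=2$ case'' is a plan but not yet a proof.

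One minor correction: Theorem~\ref{main_zeta} does not follow from Theorem~\ref{main_li} as you claim, since the Anderson--Thakur interpolating polynomials $H_{n-1}$ lie in $\Fq[\theta,t]$, not in $K^\times$. The paper derives both results simultaneously from the more general Theorem~\ref{main}, which allows arbitrary $u_i \in \Kb[t]$.
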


\subsection{Outline of this paper}
In Section \ref{notat}, we define notations which are used in this paper.
In Section \ref{t-mot}, at first we review the (pre-)$t$-motives which were originally defined by Anderson (\cite{Ande}).
We explain the way how we obtain periods from pre-$t$-motives following the work of Anderson and Thakur (\cite{AT90}, \cite{AT09}).
Then we recall Papanikolas' theory (\cite{Papa}) which states that the transcendental degree of the field generated by periods in question
over a base field coincides with the dimension of the ``motivic Galois group'' of a pre-$t$-motive.
As an example (see Example \ref{ex-L}), we see that MZVs and CMPLs at algebraic points appear as periods of some pre-$t$-motives.
The primary tools of proving the main results are to apply Papanikolas' theory.
In Section \ref{proof}, we give proofs of Theorems \ref{main_zeta} and \ref{main_li}.
These are simultaneously proved as corollaries of Theorem \ref{main}.
This theorem is proved by using the arguments of Section \ref{t-mot}.
In \ref{app_proof}, we give a proof of Theorem \ref{alg-dep1} which states a criterion
of the algebraic independence of ``depth one elements''.
This gives a generalization of the main result of \cite{ChYu}.

\section{Notations}\label{notat}
We continue to use the notations of the previous section.
Let $t$ be a variable independent of $\theta$.
Let $\T := \{f \in \Cinf[\![t]\!] | f \mathrm{ \ converges \ on \ } |t|_{\infty} \leq 1\}$ be the Tate algebra
and $\LL$ the fractional field of $\T$.
We set
$$\E := \left\{\sum a_{i} t^{i} \in \Cinf[\![t]\!] \Bigl| \lim_{i \to \infty} \sqrt[i]{|a_{i}|_{\infty}} = 0, \ [\Kinf(a_{0}, a_{1}, \dots):\Kinf] < \infty \right\}.$$
For any integer $n \in \Z$ and any formal Laurent series $f = \sum_{i} a_{i} t^{i} \in \Cinf(\!(t)\!)$, let
$$ f^{(n)} := \sum_{i} a_{i}^{q^n} t^{i} $$
be the $n$-fold twist of $f$, and set $\sigma(f) := f^{(-1)}$.
The fields $\LL$ and $\Kb(t)$ are stable under the operation $f \mapsto f^{(n)}$
and we have $\LL^{\sigma = 1} = \Fq(t)$ where $(-)^{\sigma = 1}$ is the $\sigma$-fixed part.

\begin{defi}\label{def_index}
Let $d \geq 1$ be a positive integer.
We set
$$I_{d} := \{ (i, j) \in \Z^{2} | 1 \leq j < i \leq d + 1 \}.$$
We define a depth of $(i, j) \in I_{d}$ by $\dep(i, j) := i - j$
and a total order on $I_{d}$ by setting $(i, j) \leq (k, \ell)$ if either $\dep(i, j) = \dep(k, \ell)$ and $j \leq \ell$ (hence $i \leq k$), or $\dep(i, j) < \dep(k, \ell)$.
The order on $I_{d}$ is illustrated as the following diagram:
\begin{eqnarray*}
\begin{matrix}
\mathrm{depth} \ 1 \\
\mathrm{depth} \ 2 \\
\mathrm{depth} \ 3 \\
\mathrm{depth} \ 4 \\
\\ \\
\end{matrix}
\
\begin{matrix}
\searrow \\
\searrow \\
\searrow \\
\searrow \\
\\
\end{matrix}
\begin{bmatrix}
& & & & & \\
\cdashline{1-1}
\multicolumn{1}{c:}{\circ \makebox[0pt][l]{\smash{\begin{xy}\ar (3, -3)\end{xy}}}} & & & & & \\
\cdashline{2-2}
\circ \makebox[0pt][l]{\smash{\begin{xy}\ar (3, -3)\end{xy}}} & \multicolumn{1}{c:}{\circ \makebox[0pt][l]{\smash{\begin{xy}\ar (3, -3)\end{xy}}}} & & & & \\
\cdashline{3-3}
\circ \makebox[0pt][l]{\smash{\begin{xy}\ar (3, -3)\end{xy}}} & \circ \makebox[0pt][l]{\smash{\begin{xy}\ar (3, -3)\end{xy}}} &
\multicolumn{1}{c:}{\circ \makebox[0pt][l]{\smash{\begin{xy}\ar (3, -3)\end{xy}}}} & & & \\
\cdashline{4-4}
\circ \makebox[0pt][l]{\raisebox{2pt}[0pt][0pt]{\begin{xy}\ar (-3, 0)\end{xy}}} & \circ \makebox[0pt]{\raisebox{2pt}[0pt][0pt]{\begin{xy}\ar (-8, 5)\end{xy} \ }} &
\circ \makebox[0pt]{\raisebox{2pt}[0pt][0pt]{\begin{xy}\ar (-14, 10)\end{xy} \ \ \ \ \ }} & \multicolumn{1}{c:}{\circ} & &
\end{bmatrix}
\end{eqnarray*}
For each $(i, j) \in I_{d}$ and a $d$-tuple of symbol $\underline{y} = (y_{1}, \dots, y_{d})$, we set
$$\underline{y}_{i j} := (y_{j}, y_{j+1}, \dots, y_{i-1}).$$
\end{defi}

Note that the MZV $\zeta(n_{1}, \dots, n_{d})$ appears as a period of a $t$-motive of dimension $d+1$ (Example \ref{ex-L}).
Moreover, the MZV $\zeta(n_{j}, \dots, n_{i-1})$ for $(i, j) \in I_{d}$ appears as an $(i, j)$-th component of a matrix of periods of that $t$-motive.

We set
$$\Omega(t) := (-\theta)^{-\frac{q}{q-1}} \prod_{i=1}^{\infty}\left(1-\frac{t}{\theta^{q^i}}\right) \in \Kinfb[\![t]\!]$$
which is an element of $\E$.
Since $\Omega$ has a simple zero at $\theta^{q^{i}}$ for each $i = 1, 2, \dots$, it is transcendental over $\Kb(t)$.
It satisfies the equation
$$\Omega^{(-1)} = (t-\theta) \Omega$$
and we have
$$\Omega(\theta) = \frac{1}{\pit}.$$

We set $D_{0} := 1$ and $D_{i} := \prod_{j=0}^{i-1} (\theta^{q^{i}} - \theta^{q^{j}})$ for $i \geq 1$.
For each integer $n \geq 0$ with the $q$-expansion $n = \sum_{i} n_{i} q^{i}$ ($0 \leq n_{i} < q$),
the Carlitz factorial is defined by
$$\Gamma_{n+1} := \prod_{i} D_{i}^{n_{i}}.$$
Let $\nub = (n_{1}, \dots, n_{d})$ be an index
and $\uub = (u_{1}, \dots, u_{d}) \in (\Kb[t])^{d}$ a $d$-tuple of polynomials.
For a polynomial $u = \sum_{j} \alpha_{j} t^{j} \in \Kb[t]$, we set $\uval := \max_{j} |\alpha_{j}|_{\infty}$.
When $\uival < \niq$ for each $i$, we set
$$L_{\uub,\nub}(t) := \sum_{i_{1} > \cdots > i_{d} \geq 0} \frac{u_{1}^{(i_{1})} \cdots u_{d}^{(i_{d})}}
{((t-\theta^{q}) \cdots (t-\theta)^{q^{i_{1}}})^{n_{1}} \cdots ((t-\theta^{q}) \cdots (t-\theta)^{q^{i_{d}}})^{n_{d}}} \in \Kinfb[\![t]\!],$$
which converges on $|t|_{\infty} < |\theta|_{\infty}^{q}$ and satisfies the equation
$$L_{\uub,\nub}^{(-1)} = \frac{u_{d}^{(-1)}}{(t-\theta)^{n_{1} + \cdots + n_{d-1}}} L_{\uub_{d 1}, \nub_{d 1}}
+ \frac{L_{\uub,\nub}}{(t-\theta)^{n_{1} + \cdots + n_{d}}},$$
where we set $L_{\uub_{1 1}, \nub_{1 1}} = L_{\emptyset, \emptyset} := 1$.
When $\uub = \aub \in \Kb^{d}$ with $|\alpha_{i}|_{\infty} < \niq$ for each $i$, we have
$L_{\aub, \nub}(\theta) = \Li_{\nub}(\aub)$.
Anderson and Thakur (\cite{AT90}, \cite{AT09}) showed that there exists a polynomial $H_{n-1} \in \Fq[\theta, t]$ for each $n \geq 1$ such that
$|\!|H_{n-1}|\!|_{\infty} < \nq$ and
$L_{H(\nub), \nub}(\theta) = \Gamma_{n_{1}} \cdots \Gamma_{n_{d}} \zeta(\nub)$
where $H(\nub) := (H_{n_{1}-1}, \dots, H_{n_{d}-1})$.

\section{Review of pre-$t$-motives}\label{t-mot}
In this section, we review the notions of pre-$t$-motives and Papanikolas' theory for pre-$t$-motives.
For more details, see \cite{Papa}.
A {\it pre-$t$-motive} is an \'etale $\vp$-module over $(\Kb(t), \sigma)$;
this means a finite-dimensional $\Kb(t)$-vector space $M$ equipped with a $\sigma$-semilinear bijective map
$\vp \colon M \to M$.
A morphism of pre-$t$-motives is a $\Kb(t)$-linear map which is compatible with the $\vp$'s.
A tensor product of two pre-$t$-motives are defined naturally.
For any pre-$t$-motive $M$, the {\it Betti realization} of $M$ is defined by
$$M^{B} := \left(\LL \otimes_{\Kb(t)} M \right)^{\sigma \otimes \vp = 1},$$
where $(-)^{\sigma \otimes \vp = 1}$ is the $\sigma \otimes \vp$-fixed part.
A pre-$t$-motive $M$ is called {\it rigid analytically trivial} if the natural injection
$\LL \otimes_{\Fq(t)} M^{B} \hookrightarrow \LL \otimes_{\Kb(t)} M$
is an isomorphism.
The category of rigid analytically trivial pre-$t$-motives forms a neutral Tannakian category over $\Fq(t)$
with fiber functor $M \mapsto M^{B}$.
For any such $M$, we denote by $G_{M}$ the fundamental group of the Tannakian subcategory generated by $M$ with respect to the Betti realization.
By definition, $G_{M}$ is an $\Fq(t)$-subgroup scheme of $\GL(M^{B})$.

Let $\Phi \in \GL_{r}(\Kb(t))$ be a matrix.
We consider the system of Frobenius difference equations
\begin{eqnarray}\label{Fr-eq}
\Psi^{(-1)} = \Phi \Psi
\end{eqnarray}
with solution entries of $\Psi = (\Psi_{i j})$ in $\LL$.
The matrix $\Phi$ defines the pre-$t$-motive
$M_{\Phi} := \Kb(t)^{r}$ with $$\vp(x_{1}, \dots, x_{r}) = (x_{1}^{(-1)}, \dots, x_{r}^{(-1)}) \Phi.$$
The pre-$t$-motive $M_{\Phi}$ is rigid analytically trivial if and only if
the system of Frobenius difference equations (\ref{Fr-eq}) has a solution matrix $\Psi$ in $\GL_{r}(\LL)$,
and in this case $\Psi^{-1} \mbm$ forms an $\Fq(t)$-basis of $(M_{\Phi})^{B}$,
where $\mbm$ is the standard basis of $\Kb(t)^{r}$
on which the action of $\vp$ is presented as $\Phi$.
Such matrix $\Psi$ is called a {\it rigid analytic trivialization} of $\Phi$,
and the values $\Psi_{i j}(\theta)$ of its components at $t = \theta$ (if they converge) are called {\it periods} of $M_{\Phi}$.
For such $\Psi$, we set $\Psit := \Psi_{1}^{-1} \Psi_{2} \in \GL_{r}(\LL \otimes_{\Kb(t)} \LL)$,
where $\Psi_{1}$ (resp. $\Psi_{2}$) is the matrix in $\GL_{r}(\LL \otimes_{\Kb(t)} \LL)$ such that $(\Psi_{1})_{ij} = \Psi_{ij} \otimes 1$ (resp. $(\Psi_{2})_{ij} = 1 \otimes \Psi_{ij}$).
Let $X = (X_{ij})$ be the $r \times r$ matrix of independent variables $X_{ij}$.
We define an $\Fq(t)$-algebra homomorphism $\nu$ by
$$\nu \colon \Fq(t)[X, 1/\det X] \to \LL \otimes_{\Kb(t)} \LL ; \ X_{ij} \mapsto \Psit_{ij}$$
and set
$$G_{\Psi} := \Spec (\Fq(t)[X, 1/\det X] / \ker \nu) \subset \GL_{r / \Fq(t)}.$$
For each $\Fq(t)$-algebra $R$, we have the map given by
\begin{eqnarray}\label{map_G}
G_{\Psi}(R) \to G_{M_{\Phi}}(R); \ g \mapsto (\mbf \cdot \Psi^{-1} \mbm \mapsto \mbf g^{-1} \cdot \Psi^{-1} \mbm)
\end{eqnarray}
where $\mbf$ runs over all elements of $\Mat_{1 \times r}(R)$.

\begin{theo}[{\cite[Theorems 4.3.1, 4.5.10, 5.2.2]{Papa}}]\label{Psi-M}
Let $\Phi$ and $\Psi$ be matrices satisfying (\ref{Fr-eq}),
and let $G_{M_{\Phi}}$ and $G_{\Psi}$ be as above.

$(1)$ The scheme $G_{\Psi}$ is a smooth subgroup scheme of $\GL_{r / \Fq(t)}$ and 
the above map $G_{\Psi} \to G_{M_{\Phi}}$ is an isomorphism of group schemes over $\Fq(t)$.

$(2)$ Let $\Kb(t)(\Psi)$ be the field generated by the entries of $\Psi$ over $\Kb(t)$.
Then we have
$$\dim G_{\Psi} = \tdeg_{\Kb(t)} \Kb(t)(\Psi).$$

$(3)$ Assume that $\Phi \in \Mat_{r}(\Kb[t])$, $\Psi \in \GL_{r}(\T) \cap \Mat_{r}(\E)$,
and $\det \Phi = c (t-\theta)^{d}$ for some $c \in \Kbt$ and $d \geq 0$.
Let $\Kb(\Psi(\theta))$ be the field generated by the entries of $\Psi(\theta)$ over $\Kb$.
Then we have
$$\tdeg_{\Kb(t)} \Kb(t)(\Psi) = \tdeg_{\Kb} \Kb(\Psi(\theta)).$$
\end{theo}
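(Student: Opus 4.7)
The plan is to prove the three parts separately, combining Tannakian duality, a torsor structure, and the Anderson–Brownawell–Papanikolas specialization principle. The crucial preliminary observation for part (1) is that $\Psit = \Psi_{1}^{-1}\Psi_{2}$ satisfies $\Psit^{(-1)} = \Psit$, because each twist $\Psi_{i}^{(-1)}$ equals $\Phi\Psi_{i}$ and the $\Phi$'s cancel, so the ideal $\ker\nu$ is genuinely defined over $\Fq(t)$. To give $G_{\Psi}$ a group structure I would verify a cocycle relation of the form $\Psit_{12}\Psit_{23} = \Psit_{13}$ in the triple tensor product $\LL \otimes_{\Kb(t)} \LL \otimes_{\Kb(t)} \LL$; this relation induces the comultiplication on $\Fq(t)[X,1/\det X]/\ker\nu$, while the inverse $\Psit^{-1}$ supplies the antipode. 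Smoothness would be extracted by showing that $G_{\Psi}$ becomes a torsor under itself after base change (see the next paragraph), forcing reducedness and smoothness.

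Still within (1), the hardest step is to identify $G_{\Psi}$ with $G_{M_{\Phi}}$. I would build the morphism via formula (\ref{map_G}), check its well-definedness by verifying that it commutes with the tensor operations on pre-$t$-motives (so that the Tannakian fiber functor $(-)^{B}$ intertwines the two actions), and then establish containment in both directions. The inclusion $G_{\Psi} \subseteq G_{M_{\Phi}}$ comes from the fact that $\Psit$ parametrizes precisely those changes of rigid analytic trivialization that remain $\vp$-compatible. The reverse inclusion requires Tannakian reconstruction: any automorphism of the fiber functor $(-)^{B}$ restricted to the subcategory generated by $M_{\Phi}$ must be represented by right multiplication by some element realized in $\Psit$, because $\Psit$ already encodes every identity among matrix entries of trivializations of subquotients and tensor constructions of $M_{\Phi}$.

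For part (2), I would introduce $\Gamma_{\Psi} := \Spec \Kb(t)[\Psi,1/\det\Psi] \subset \GL_{r/\Kb(t)}$ and show that right multiplication by $G_{\Psi,\Kb(t)}$ preserves $\Gamma_{\Psi}$ and acts simply transitively, making $\Gamma_{\Psi}$ a $G_{\Psi}$-torsor over $\Spec \Kb(t)$. Stability reduces to the assertion that for $g \in G_{\Psi}(R)$, the product $\Psi g$ satisfies the same relations over $\Kb(t) \otimes R$ that $\Psi$ does (because those relations are exactly the ones encoded by $\Psit$, which is right-$G_{\Psi}$-invariant in the appropriate sense), and freeness follows tautologically from the definition of $G_{\Psi}$. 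The torsor property then gives $\dim \Gamma_{\Psi} = \dim G_{\Psi}$; since $\Gamma_{\Psi}$ is an integral closed subscheme of $\GL_{r/\Kb(t)}$ with function field $\Kb(t)(\Psi)$, the equality $\tdeg_{\Kb(t)} \Kb(t)(\Psi) = \dim G_{\Psi}$ is immediate.

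For part (3), the approach is the Anderson–Brownawell–Papanikolas specialization criterion. The hypotheses $\Phi \in \Mat_{r}(\Kb[t])$, $\Psi \in \GL_{r}(\T) \cap \Mat_{r}(\E)$ and $\det\Phi = c(t-\theta)^{d}$ guarantee that $\Psi$ is entire with controlled growth and that $\det\Psi(\theta) \neq 0$, so specialization at $t = \theta$ behaves well. The inequality $\tdeg_{\Kb} \Kb(\Psi(\theta)) \leq \tdeg_{\Kb(t)} \Kb(t)(\Psi)$ is formal. For the reverse, any algebraic relation $P \in \Kb[X_{ij}]$ with $P(\Psi(\theta)) = 0$ is lifted via the ABP criterion to some $Q \in \Kb[t][X_{ij}]$ with $Q(\Psi) = 0$ that specializes at $t = \theta$ to a nonzero scalar multiple of $P$, yielding the needed relation over $\Kb(t)$. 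The main obstacle is justifying the ABP lifting itself: it demands a delicate analysis of vanishing orders at the points $\theta^{q^{i}}$ of entire functions in $\E$, combined with the Frobenius equation $\Psi^{(-1)} = \Phi\Psi$ used to propagate the relation from the special point to the generic one. Once this input is in hand, the rest of (3) is dimension book-keeping.
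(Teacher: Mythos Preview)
The paper does not supply its own proof of this theorem: it is quoted verbatim from Papanikolas \cite[Theorems~4.3.1, 4.5.10, 5.2.2]{Papa}, with only Remark~\ref{rem_ABP} noting that part~(3) rests on the ABP criterion from \cite{ABP}. So there is no in-paper argument to compare against.

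That said, your outline faithfully reproduces the strategy of the cited sources. The $\sigma$-invariance of $\Psit$ and the cocycle identity $\Psit_{12}\Psit_{23}=\Psit_{13}$ are exactly how Papanikolas endows $G_{\Psi}$ with its $\Fq(t)$-group structure; the identification $G_{\Psi}\cong G_{M_{\Phi}}$ via the map~(\ref{map_G}) is his Theorem~4.5.10; the torsor $\Gamma_{\Psi}$ over $\Spec\Kb(t)$ is precisely his device for Theorem~4.3.1; and the specialization step in part~(3) is the ABP linear-independence criterion applied to a basis of the relation ideal, as in \cite[\S5]{Papa}. Your sketch is correct in spirit, though each of the three parts hides substantial work (notably the Tannakian reconstruction for the reverse inclusion in (1), and the full ABP argument in (3)) that you would need to carry out in detail rather than merely invoke.
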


\begin{rema}\label{rem_ABP}
The result $(3)$ in Theorem \ref{Psi-M} is rooted in the deep result in \cite{ABP},
which is addressed as ABP-criterion.
However, the restriction of the condition on $\det \Phi$ originated from Anderson $t$-motives
but such restriction indeed can be relaxed (see \cite{Cha1}).
But for our purpose, the above is sufficient and so we do not state the refined version given in \cite{Cha1}.
\end{rema}

\begin{exam}\label{ex-C}
The Carlitz pre-$t$-motive $C$ is the pre-$t$-motive defined by the $1 \times 1$-matrix $\begin{bmatrix} t-\theta \end{bmatrix}$.
Since $\Omega^{(-1)} = (t-\theta) \Omega$, the Carlitz pre-$t$-motive is rigid analytically trivial.
Since $\Omega$ is transcendental over $\Kb(t)$, we have $\dim G_{[\Omega]} = 1$, and thus $G_{C} = G_{[\Omega]} = \Gm$.
\end{exam}

\begin{exam}\label{ex-L}
Let $\nub = (n_{1}, \dots, n_{d})$ be an index and $\uub = (u_{1}, \dots, u_{d}) \in (\Kb[t])^{d}$ be a $d$-tuple of polynomials such that
$\uival < \niq$ for each $i$.
We consider $(d+1) \times (d+1)$-matrices
$$\Phi :=
\begin{bmatrix}
(t-\theta)^{n_{1} + \cdots + n_{d}}                & 0                                                           & 0         & \cdots                            & 0 \\
u_{1}^{(-1)} (t-\theta)^{n_{1} + \cdots + n_{d}} & (t-\theta)^{n_{2} + \cdots + n_{d}}                & 0         & \cdots                            & 0 \\
0                                                           & u_{2}^{(-1)} (t-\theta)^{n_{2} + \cdots + n_{d}} & \ddots &                                       & \vdots \\
\vdots                                                    &                                                             & \ddots & (t-\theta)^{n_{d}}                & 0 \\
0                                                           & \cdots                                                   & 0         & u_{d}^{(-1)} (t-\theta)^{n_{d}} & 1 \\ \end{bmatrix}$$
and
$$\Psi :=
\begin{bmatrix}
\Omega^{n_{1} + \cdots + n_{d}}                                         & 0                                                                                  & 0         & \cdots            & 0 \\
\Omega^{n_{1} + \cdots + n_{d}} L_{\uub_{2 1}, \nub_{2 1}}        & \Omega^{n_{2} + \cdots + n_{d}}                                          & 0         & \cdots            & 0 \\
\Omega^{n_{1} + \cdots + n_{d}} L_{\uub_{3 1}, \nub_{3 1}}        & \Omega^{n_{2} + \cdots + n_{d}} L_{\uub_{3 2}, \nub_{3 2}}         & \ddots &                      & \vdots \\
\vdots                                                                          &  \vdots                                                                          & \ddots & \Omega^{n_{d}} & 0 \\
\Omega^{n_{1} + \cdots + n_{d}} L_{\uub_{d+1, 1}, \nub_{d+1, 1}} & \Omega^{n_{2} + \cdots + n_{d}} L_{\uub_{d+1, 2}, \nub_{d+1, 2}} & \cdots &
                                                                                                                                                   \Omega^{n_{d}} L_{\uub_{d+1, d}, \nub_{d+1, d}} & 1 \\
\end{bmatrix},$$
where the notations $\nub_{i j}$ and $\uub_{i j}$ are defined in Definition \ref{def_index}.
These satisfy the Frobenius difference equations (\ref{Fr-eq}).
Hence $\Psi$ is a rigid analytic trivialization of $\Phi$.
Let $M$ be the pre-$t$-motive defined by $\Phi$.
By Theorem \ref{Psi-M}, we have an isomorphism $G_{\Psi} \to G_{M}$ and
$$\tdeg_{\Kb} \Kb(\pit, L_{\uub_{i j}, \nub_{i j}}(\theta) | (i, j) \in I_{d}) = \dim G_{\Psi}.$$
Thus when $\uub = H(\nub)$ (resp. $\uub = \aub \in \Kb^{d}$ with $|\alpha_{i}|_{\infty} < \niq$ for each $i$),
the multizeta values $\zeta(\nub_{i j})$ (resp. the Carlitz multiple polylogarithms $\Li_{\nub_{i j}}(\aub_{i j})$)
appear as periods of the pre-$t$-motive $M$.
By the definition of $G_{\Psi}$, we also have the inclusion
$$G_{\Psi} \subset
\left \{ \begin{bmatrix} a^{n_{1} + \cdots + n_{d}} & & & \\
x_{2 1} & a^{n_{2} + \cdots + n_{d}} & & \\
\vdots & \ddots & \ddots & \\
x_{d+1, 1} & \cdots & x_{d+1, d} & 1 \\ \end{bmatrix} \right \}.$$
We can calculate $\Psit$ explicitly as
\begin{eqnarray*}
\Psit_{i j} = (\Omega^{-1} \otimes \Omega)^{n_{i} + \cdots + n_{d}} \sum_{s = j}^{i} \sum_{r=0}^{i - s} (-1)^{r}
\sum_{\substack{s = i_{0} < i_{1} < \cdots \\ < i_{r-1} < i_{r} = i}} L_{i_{1} i_{0}} \cdots L_{i_{r} i_{r-1}} \otimes \Omega^{n_{j} + \cdots + n_{i-1}} L_{s j}
\end{eqnarray*}
for each $(i, j) \in I_{d}$, where we write $L_{k \ell} := L_{\uub_{k \ell}, \nub_{k \ell}}$.
\end{exam}

\section{Algebraic independence}\label{proof}
In this section, we prove Theorems \ref{main_zeta} and \ref{main_li}.
For square matrices $A$ and $B$, we denote by $A \oplus B$ the diagonal block matrix made of $A$ and $B$.
We use the letters $a$ and $x_{ij}$'s as coordinate variables of algebraic groups.
In our proofs, our purpose is to show that the dimension of the algebraic group in question is as maximal as possible,
and so we always work on the $\Fqtb$-valued points without studying the reduced/non-reduced structures,
where $\Fqtb$ is a fixed algebraic closure of $\Fq(t)$.
So for an algebraic group $G$ over $\Fq(t)$, when it is clear from the contents, without confusion we still denote by $G$ the $\Fqtb$-valued points of $G$.

Before starting the proof of Theorems \ref{main_zeta} and \ref{main_li}, we state several algebraic independence results concerning the case of depth one.
Papanikolas, Chang and Yu proved the following theorem which states a criterion of the algebraic independence of MZVs and CMPLs at algebraic points of depth one.
Note that they discussed only the case where $u_{j} = \alpha_{j} \in \Kb$ with $|\alpha_{j}|_{\infty} < \nq$, but their arguments work also for any $u_{j} \in \Kb[t]$
with $\ujval < \nq$.
\begin{theo}[{\cite[Theorem 6.3.2]{Papa}}, {\cite[Theorem 3.1]{ChYu}}]\label{lin-alg}
Let $n \geq 1$ be a positive integer and $u_{1}, \dots, u_{r} \in \Kb[t]$ polynomials with $\ujval < \nq$ for each $j$.
If $\pit^{n}, L_{u_{1}, n}(\theta), \dots, L_{u_{r}, n}(\theta)$ are linearly independent over $K$,
then they are algebraically independent over $\Kb$.
\end{theo}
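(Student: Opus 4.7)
The plan is to apply Papanikolas' Galois theory (Theorem \ref{Psi-M}) to a single pre-$t$-motive whose periods realise $\pit^n$ together with all the $L_{u_j,n}(\theta)$. Specialising Example \ref{ex-L} to the depth-one situation, I would take the $(r+1)\times(r+1)$ lower-triangular matrices
$$\Phi=\begin{pmatrix}(t-\theta)^n & & & \\ u_1^{(-1)}(t-\theta)^n & 1 & & \\ \vdots & & \ddots & \\ u_r^{(-1)}(t-\theta)^n & & & 1\end{pmatrix},\quad \Psi=\begin{pmatrix}\Omega^n & & & \\ \Omega^n L_{u_1,n} & 1 & & \\ \vdots & & \ddots & \\ \Omega^n L_{u_r,n} & & & 1\end{pmatrix}.$$
A direct check using $\Omega^{(-1)}=(t-\theta)\Omega$ and $L_{u_j,n}^{(-1)}=u_j^{(-1)}+L_{u_j,n}/(t-\theta)^n$ shows $\Psi^{(-1)}=\Phi\Psi$, so $\Psi$ is a rigid analytic trivialization of $\Phi$. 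By parts (2) and (3) of Theorem \ref{Psi-M}, and the identity $\Omega(\theta)=\pit^{-1}$, the transcendence degree in question equals $\dim G_\Psi$, so the task reduces to computing this dimension.

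The shape of $\Psi$ confines $G_\Psi$ to the closed subgroup $H\subset\GL_{r+1/\Fq(t)}$ of matrices with vanishing upper-right block, bottom-right block equal to $I_r$, and first diagonal entry in $\Gm$; this $H$ is the semidirect product $\Gm\ltimes\Ga^r$ in which $\Gm$ acts on $\Ga^r$ by scalar multiplication. Hence $\dim G_\Psi\le r+1$. The composition $G_\Psi\hookrightarrow H\twoheadrightarrow\Gm$ is surjective, for otherwise its image would be a proper subgroup of $\Gm$ yielding a non-trivial algebraic relation satisfied by some power of $\Omega$, contradicting the transcendence of $\pit$ over $\Kb(t)$. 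Let $V$ denote the kernel of this projection; it is a closed subgroup of $\Ga^r$ stable under conjugation by $\Gm$, and this $\Gm$-equivariance under non-trivial scaling forces $V$ to be an $\Fq(t)$-linear subspace.

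It remains to prove $V=\Ga^r$. Suppose instead $V\subsetneq\Ga^r$; then there exist $c_1,\dots,c_r\in\Fq(t)$, not all zero, with $\sum_j c_j x_j=0$ on $V$. Because every element of $G_\Psi$ factors as $(a,0)\cdot v$ with $v\in V$ and this product preserves the $X$-coordinate, the same relation holds on all of $G_\Psi$. Translating via $\nu(X_{ij})=\Psit_{ij}$ gives $\sum_j c_j\Psit_{j+1,1}=0$ in $\LL\otimes_{\Kb(t)}\LL$, and a short computation using $\Psit_{j+1,1}=-L_{u_j,n}\otimes\Omega^n+1\otimes\Omega^nL_{u_j,n}$ and invertibility of $1\otimes\Omega^n$ forces $F\otimes 1=1\otimes F$, where $F:=\sum_j c_jL_{u_j,n}$, whence $F\in\Kb(t)$. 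Exploiting the Frobenius equation $F^{(-1)}=v^{(-1)}+F/(t-\theta)^n$ (for $v=\sum_j c_ju_j$) together with the analytic behaviour of $L_{u_j,n}$ on $|t|_\infty<|\theta|_\infty^q$, one refines the conclusion to $F\in\Kb(t)\cdot 1+\Kb(t)\cdot\Omega^{-n}$; specialising at $t=\theta$ and noting that $c_j\in\Fq(t)$ specialises to an element of $K$ (since $\theta$ is transcendental over $\Fq$) produces a non-trivial $K$-linear relation among $\pit^n,L_{u_1,n}(\theta),\dots,L_{u_r,n}(\theta)$, contradicting the hypothesis.

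The main obstacle is this last step: upgrading the naive conclusion $F=h\in\Kb(t)$ into the tighter shape $h_0+h_\infty\Omega^{-n}$ with $h_0,h_\infty\in\Kb(t)$, so that the evaluation at $\theta$ actually involves $\pit^n=\Omega(\theta)^{-n}$. Without this extra structure, one only obtains a $\Kb$-linear identity among the $L_{u_j,n}(\theta)$ with an uncontrolled additive constant in $\Kb$, which is too weak to contradict the stated hypothesis. Carrying out this refinement is the technical heart of the Papanikolas-Chang-Yu argument, essentially an ABP-criterion-style specialisation applied in the presence of the extension structure; granting it, the computation above yields $\dim G_\Psi=r+1$ and hence the claimed algebraic independence.
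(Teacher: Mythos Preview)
The paper does not supply its own proof of this theorem: it is quoted from \cite{Papa} and \cite{ChYu}, with only the remark that the original arguments (stated for $u_j=\alpha_j\in\Kb$) carry over verbatim to $u_j\in\Kb[t]$. So there is no in-paper argument to compare against; the comments below concern the soundness of your sketch, which does follow the strategy of the cited references.

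There is an unjustified step before the gap you flag. The factorisation ``every $g\in G_\Psi$ is $(a,0)\cdot v$ with $v\in V$'' presupposes that the diagonal torus $\{(a,0)\}$ lies inside $G_\Psi$, which you have not shown. In $\Gm\ltimes\Ga^r$ there are many one-parameter subgroups projecting isomorphically to $\Gm$, namely $\{(a,\,c(a-1)):a\in\Gm\}$ for each $c\in\Ga^r$, and a priori $G_\Psi$ may sit over any one of them. Done correctly, a linear form $\sum_j e_j y_j$ vanishing on $V$ only gives an equation $\sum_j e_j X_{j+1,1}=c_0(X_{11}-1)$ on $G_\Psi$ for some $c_0\in\Fq(t)$; pushing this through $\nu$ yields $F+c_0\Omega^{-n}\in\Kb(t)$ rather than merely $F\in\Kb(t)$. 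Curiously, this is exactly the refined shape $h_0+h_\infty\Omega^{-n}$ you later say you need but cannot produce: it comes for free once the torus-section issue is handled properly.

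Even so, your identification of the remaining obstacle is accurate. Evaluating $F+c_0\Omega^{-n}=g\in\Kb(t)$ at $t=\theta$ gives $\sum_j e_j(\theta)L_{u_j,n}(\theta)+c_0(\theta)\pit^{n}=g(\theta)$ with coefficients $e_j(\theta),c_0(\theta)\in K$ but right-hand side only in $\Kb$; this does not contradict $K$-linear independence. Forcing the constant to be zero (equivalently, descending the $\Kb(t)$-relation to one whose specialisation at $\theta$ is controlled) is the genuine technical core of \cite[\S6.3]{Papa} and \cite[\S3]{ChYu}; it uses the difference equation satisfied by $g$ together with an ABP-type argument and is not a formality. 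Your sketch is therefore incomplete at precisely the point you suspected, and additionally at the earlier factorisation step.
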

Thus $\pit$ and $\zeta(n)$ (or $\Li_{n}(\alpha)$) are algebraically independent over $\Kb$
for each ``odd'' integer $n \geq 1$ and $\alpha \in K^{\times}$ with $|\alpha|_{\infty} < \nq$,
because $\pit^{n} \not\in \Kinf$ and $\zeta(n), \Li_{n}(\alpha) \in \Kinf^{\times}$ for such $n$ and $\alpha$.

\begin{theo}\label{alg-dep1}
Let $n_{1}, \dots, n_{d} \geq 1$ be positive integers such that $n_{i} / n_{j}$ is not an integral power of $p$ for each $i \neq j$.
For each $i$, we take polynomials $u_{i 1}, \dots, u_{i r_{i}} \in \Kb[t]$ with $|\!|u_{i j}|\!|_{\infty} < \niq$ for $j = 1, \dots, r_{i}$.
If $\pit^{n_{i}}, L_{u_{i 1}, n_{i}}(\theta), \dots, L_{u_{i r_{i}}, n_{i}}(\theta)$ are linearly independent over $K$ for each $i$,
then the $1 + \sum_{i = 1}^{d} r_{i}$ elements $\{ \pit, L_{u_{i j}, n_{i}}(\theta) | 1 \leq i \leq d, \ 1 \leq j \leq r_{i} \}$
are algebraically independent over $\Kb$.
\end{theo}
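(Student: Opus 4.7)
The plan is to apply Papanikolas' theory (Theorem \ref{Psi-M}) to a direct sum of pre-$t$-motives built from the given data. For each $i$, form the $(r_i+1) \times (r_i+1)$ lower-triangular matrix $\Phi_i$ with diagonal $((t-\theta)^{n_i}, 1, \dots, 1)$ and with $u_{ij}^{(-1)}(t-\theta)^{n_i}$ in the first column below the diagonal, exactly as in the depth-one case of Example \ref{ex-L}; then the rigid analytic trivialization $\Psi_i$ has top-left $\Omega^{n_i}$, first-column entries below the top-left $\Omega^{n_i} L_{u_{ij}, n_i}$, and $1$'s elsewhere on the diagonal. Setting $\Phi := \bigoplus_i \Phi_i$ and $\Psi := \bigoplus_i \Psi_i$, one has $\det \Phi = (t-\theta)^{\sum_i n_i}$, so Theorem \ref{Psi-M} reduces the claimed algebraic independence to the dimension estimate $\dim G_\Psi = 1 + \sum_i r_i$.

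The group $G_\Psi$ is contained in the ambient algebraic group $\widetilde{G} := \Gm \ltimes V$ of dimension $1 + \sum_i r_i$, where $V := \bigoplus_i \Ga^{r_i}$ and $a \in \Gm$ acts on the $i$-th summand by the character $a \mapsto a^{n_i}$; this containment reflects that the top-left entries $\Omega^{n_i}$ of the blocks $\Psi_i$ all come from the single element $\Omega$. Because each $M_i$ is a direct summand of $M = \bigoplus_i M_i$, the Tannakian formalism yields a surjection $G_\Psi \twoheadrightarrow G_{\Psi_i}$; combined with Theorem \ref{lin-alg} applied to the depth-one linear-independence hypothesis for each $i$, this forces $\dim G_{\Psi_i} = 1 + r_i$, so $G_{\Psi_i}$ fills its ambient $\Gm \ltimes \Ga^{r_i}$. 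In particular $G_\Psi$ surjects onto the reductive quotient $\Gm$ of $\widetilde{G}$, so it suffices to show that the normal subgroup $N := G_\Psi \cap V$ equals all of $V$.

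Since $V$ is abelian, the conjugation action of $G_\Psi$ on $N$ factors through $G_\Psi/N \cong \Gm$, so $N$ is a $\Gm$-invariant closed subgroup of $V$. The main obstacle is structural: in characteristic $p$, smooth connected subgroups of a vector group need not be linear subspaces, because of Frobenius twists. For instance $\{(v, v^{p^e}) : v \in \Ga\}$ is a one-dimensional subgroup of $\Ga \oplus \Ga$ which is $\Gm$-invariant precisely when the two coordinates carry weights whose ratio is $p^e$. The hypothesis that no $n_i/n_j$ is an integer power of $p$ is exactly what rules out such exotic subgroups: a $\Gm$-equivariant morphism $\Ga^{r_i} \to \Ga^{r_j}$ for $i \neq j$ is given by a matrix of additive $p$-polynomials $v \mapsto \sum_e c_e v^{p^e}$ with $c_e \neq 0$ only when $n_j = p^e n_i$, so under the hypothesis only the zero map is equivariant. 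Consequently $N$ splits, on identity components, as $\bigoplus_i W_i$ with each $W_i$ an ordinary linear subspace of $\Ga^{r_i}$. Finally, a snake-lemma chase on the reductive-quotient sequences for $G_\Psi \twoheadrightarrow G_{\Psi_i}$ shows that the image of $N \to \Ga^{r_i}$ has finite cokernel in $\Ga^{r_i}$, hence has dimension $r_i$; combined with the direct-sum decomposition this forces $W_i = \Ga^{r_i}$ for every $i$, and therefore $N = V$, yielding $\dim G_\Psi = 1 + \sum_i r_i$.
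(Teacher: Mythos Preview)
Your approach and the paper's share the same architecture: embed $G_\Psi$ in $\Gm\ltimes\bigoplus_i\Ga^{r_i}$, use Tannakian surjectivity together with Theorem~\ref{lin-alg} to force the unipotent radical $N$ to project onto each $\Ga^{r_i}$, and identify the hypothesis ``$n_i/n_j$ is never a $p$-power'' as precisely what kills Frobenius-twisted obstructions. The paper, however, organizes the argument as an induction adding one coordinate $x_{k\ell}$ at a time: at each step, if the dimension fails to go up by $1$, Lemma~\ref{V=0} makes the zero-dimensional kernel trivial, and one then inspects one-dimensional slices $V_{ij}\subset V$ (cut out by a single $p$-polynomial $x_{k\ell}^{p^e}-\sum_n b_n x_{ij}^{p^n}$) to read off the forbidden relation $n_i p^{m-e}=n_k$. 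This keeps every comparison between two weight spaces at a time and never appeals to a general structure theorem for $\Gm$-stable subgroups.

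Your all-at-once route has a genuine gap at ``Consequently $N$ splits, on identity components, as $\bigoplus_i W_i$''. The sentence before it says only that there is no nonzero $\Gm$-equivariant homomorphism $\Ga^{r_i}\to\Ga^{r_j}$; that statement is about maps between the ambient weight blocks, not about the internal structure of an arbitrary $\Gm$-stable subgroup $N^0\subset\bigoplus_i\Ga^{r_i}$, and the implication is not automatic in characteristic $p$. What you need (and what the paper in effect proves in its one-coordinate-at-a-time form) is that the defining ideal of the smooth connected $N^0$ is generated by \emph{homogeneous additive} polynomials, and that any homogeneous additive polynomial in the $x_{ij}$ can involve only one value of $i$: two monomials $x_{ij}^{p^e}$ and $x_{i'j'}^{p^{e'}}$ of the same $\Gm$-degree would force $n_ip^e=n_{i'}p^{e'}$, hence $n_i/n_{i'}\in p^{\Z}$. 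With that in hand the ideal visibly decomposes block by block and your $W_i$'s appear; the rest of your argument (snake lemma forcing each projection $N\to\Ga^{r_i}$ to be surjective, hence $W_i=\Ga^{r_i}$) is fine. So the proposal is salvageable and ultimately equivalent to the paper's proof, but as written the word ``Consequently'' is hiding exactly the step where the hypothesis on the $n_i$ does its work.
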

This is almost proved in \cite{ChYu}.
For the sake of completeness, we give a proof of it in \ref{app_proof}.

The next theorem is the main result in this paper.
Clearly, Theorems \ref{main_zeta} and \ref{main_li} follow from Theorems \ref{lin-alg}, \ref{alg-dep1} and \ref{main}.
Recall that $I_{d}$ is the set defined in Definition \ref{def_index}.
The notations $\nub_{i j}$ and $\uub_{i j}$ are also defined there.
\begin{theo}\label{main}
Let $\nub = (n_{1}, \dots, n_{d})$ be an index and $\uub = (u_{1}, \dots, u_{d}) \in (\Kb[t])^{d}$ a $d$-tuple of polynomials
such that $\uival < \niq$ for each $i$.
If $\pit, L_{u_{1}, n_{1}}(\theta), \dots, L_{u_{d}, n_{d}}(\theta)$ are algebraically independent over $\Kb$,
then we have
$$\tdeg_{\Kb} \Kb(\pit, L_{\uub_{i j}, \nub_{i j}}(\theta) | (i, j) \in I_{d}) = 1 + \# I_{d} = 1 + \frac{d(d+1)}{2}.$$
\end{theo}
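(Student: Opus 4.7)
The plan is to reduce, via Example \ref{ex-L} and Theorem \ref{Psi-M} parts $(2)$ and $(3)$, to showing that $\dim G_\Psi = 1 + d(d+1)/2$. Example \ref{ex-L} already exhibits the inclusion of $G_\Psi$ in a specific block lower-triangular subgroup $H \subseteq \GL_{d+1 / \Fq(t)}$ of dimension $1 + d(d+1)/2$, so the content is the matching lower bound $\dim G_\Psi \geq 1 + d(d+1)/2$.

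I would argue by induction on $d$; the base case $d = 1$ is immediate from the hypothesis. For $d \geq 2$, consider the sub-motive $M_d \subseteq M$ cut out by the top-left $d \times d$ block of $\Phi$ (a direct identification gives $M_d \cong M' \otimes C^{\otimes n_d}$, where $M'$ is the analogous pre-$t$-motive attached to the shorter index $(n_1, \dots, n_{d-1})$ with polynomials $(u_1, \dots, u_{d-1})$), and the quotient motive $M/M_1$ cut out by the bottom-right $d \times d$ block (which is literally the analogous $M''$ for $(n_2, \dots, n_d)$ with $(u_2, \dots, u_d)$). The hypothesis restricts to the analogous hypothesis for both $M'$ and $M''$, so induction yields $\dim G_{M'} = \dim G_{M''} = 1 + (d-1)d/2$; likewise $\dim G_{M_d/M_1} = 1 + (d-2)(d-1)/2$ from the shared sub-quotient attached to $(n_2, \dots, n_{d-1})$ (tensoring with a Carlitz power does not change these Galois-group dimensions, as the Carlitz torus is only one-dimensional in any case).

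The standard Tannakian fiber-product principle then furnishes a surjection
$$G_M \twoheadrightarrow G_{M_d} \times_{G_{M_d/M_1}} G_{M/M_1},$$
whose target has dimension $\dim G_{M_d} + \dim G_{M/M_1} - \dim G_{M_d/M_1} = d(d+1)/2$. The missing dimension $1$ would have to come from the kernel $K$ of the composition $G_M \to G_{M_d} \times G_{M/M_1}$; by the block structure of $H$, $K$ sits as a closed subgroup of the single $\Ga$-factor at position $(d+1, 1)$, stable under the $\Gm$-action coming from the torus quotient, with weight $-(n_1 + \cdots + n_d)$. Since each $n_k \geq 1$, this weight is strictly more negative than every other $\Gm$-weight $-\wt(\nub_{ij})$ appearing in $\Lie(U)$, so the line $\Fq(t) \cdot E_{d+1,1}$ is one-dimensional and distinguished, and $K$ must be either $0$ or all of $\Ga$.

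The hard part is showing $K \neq 0$. The natural route is a Lie-bracket argument: take lifts $\alpha \in \Lie(G_M)$ of $E_{21} \in \Lie(G_{M_d})$ and $\beta \in \Lie(G_M)$ of $E_{d+1, 2} \in \Lie(G_{M/M_1})$, and observe that $[\alpha, \beta]$ has weight $-n_1 - (n_2 + \cdots + n_d) = -(n_1 + \cdots + n_d)$, hence lies in $\Fq(t) \cdot E_{d+1, 1}$, with ``leading'' contribution $[E_{21}, E_{d+1, 2}] = -E_{d+1, 1}$. The subtle point is that $\alpha, \beta$ are determined only up to their respective kernels, and correction terms in other weight spaces can contribute to the commutator in the $E_{d+1, 1}$-direction precisely when the numerical identity $n_1 = n_k + n_{k+1} + \cdots + n_d$ happens to hold for some $k \in \{2, \dots, d\}$. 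Ruling out a total cancellation is essentially a Hardouin-style analysis of the extension-class pairing between $M_d$ and $M/M_1$: any such cancellation must be traced back to a non-trivial algebraic relation among the depth-one periods $\pit, L_{u_1, n_1}(\theta), \dots, L_{u_d, n_d}(\theta)$, contradicting the hypothesis. Once $K = \Ga$ is obtained, $\dim G_M = 1 + d(d+1)/2$ and the induction closes.
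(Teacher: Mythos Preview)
Your induction on $d$ differs from the paper's, which inducts over the totally ordered set $I_d$ one coordinate $x_{k\ell}$ at a time; the coarser induction is where the sketch breaks down. Two steps are unjustified. First, the ``standard Tannakian fiber-product principle'' does not give the claimed surjection: Tannakian duality identifies the image of $G_M$ in $G_{M_d}\times G_{M/M_1}$ with $G_{M_d\oplus M/M_1}$, and for this to fill the fiber product over $G_{M_d/M_1}$ one needs $\langle M_d\rangle\cap\langle M/M_1\rangle=\langle M_d/M_1\rangle$. Via Theorem~\ref{Psi-M} that is equivalent to
\[
\tdeg_{\Kb}\Kb\bigl(\pit,\,L_{\uub_{ij},\nub_{ij}}(\theta):(i,j)\in I_d,\ (i,j)\neq(d{+}1,1)\bigr)=\tfrac{d(d+1)}{2},
\]
essentially the theorem with one coordinate removed --- not supplied by induction on $d$. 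Second, the bracket argument for $K\neq 0$ is incomplete. With weight-vector lifts one has $\alpha=E_{21}+\sum_k c_k E_{d+1,k}$ and $\beta=E_{d+1,2}+\sum_k c'_k E_{k,1}$ (the corrections forced into the last row and first column respectively), and then $[\alpha,\beta]=\bigl(-1+\sum_k c_kc'_k\bigr)E_{d+1,1}$, the sum over $k\geq 2$ with $n_k+\cdots+n_d=n_1$. The $c_k,c'_k$ are constrained by the unknown $\Lie(G_M)$ and are not at your disposal; the assertion that a cancellation $-1+\sum_k c_kc'_k=0$ would ``trace back'' to an algebraic relation among $\pit,L_{u_1,n_1}(\theta),\dots,L_{u_d,n_d}(\theta)$ is exactly the missing argument, and no mechanism is offered.

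The paper's finer induction sidesteps both problems. At stage $(k,\ell)$ the inductive hypothesis already grants complete freedom in every coordinate $x_{ij}$ with $(i,j)<(k,\ell)$, so one may choose explicit \emph{group} elements $X,A\in V$ with $a_{k,\ell+1}=1$ and the remaining depth-$(\dep(k,\ell)-1)$ entries of $A$ equal to $0$; a direct matrix computation then shows $X^{-1}(A^{-1}XA)\in\Ker(\psi|_V)$ with the single nonzero entry $-x_{\ell+1,\ell}$ at position $(k,\ell)$. Since $x_{\ell+1,\ell}$ is a depth-one coordinate, free by the base case, this contradicts $\Ker(\psi|_V)=1$ (which Lemma~\ref{V=0} would force if the dimension failed to increase). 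Because the commutator is computed exactly rather than modulo unknown correction terms, weight coincidences among the $n_i$ cannot cause cancellation.
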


To prove Theorems \ref{alg-dep1} and \ref{main}, we use the following lemma.
This lemma is clear, but very useful.
\begin{lemm}\label{V=0}
Let $V \subset \Ga^{r}$ be an algebraic subgroup of dimension zero.
Let $m_{1}, \dots, m_{r} \in \Z$ be non-zero integers.
Assume that $V$ is stable under the $\Gm$-action on $\Ga^{r}$ defined by
$$a. (x_{1}, \dots, x_{r}) = (a^{m_{1}} x_{1}, \dots, a^{m_{r}} x_{r}) \ \ (a \in \Gm, (x_{i}) \in \Ga^{r}).$$
Then $V(\Fqtb)$ is trivial.
\end{lemm}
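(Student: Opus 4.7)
The plan is to exploit the tension between $V$ being finite (as a zero-dimensional algebraic subscheme of $\Ga^{r}$ of finite type over $\Fq(t)$) and the $\Gm$-orbit of a point having to be connected. Since $V$ is zero-dimensional, $V(\Fqtb)$ is a finite set of geometric points. For each $v \in V(\Fqtb)$ the stability of $V$ under the $\Gm$-action produces a morphism $\Gm \to V$, $a \mapsto a.v$. The source is geometrically irreducible (hence connected) and the target is a finite scheme, so the image must be a single closed point; evaluating at $a = 1$ identifies that point as $v$ itself. Thus every $v \in V(\Fqtb)$ is necessarily fixed by all of $\Gm(\Fqtb)$.

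Writing $v = (v_{1}, \dots, v_{r})$, the condition $a.v = v$ for all $a \in \Gm(\Fqtb) = \Fqtb^{\times}$ becomes $a^{m_{i}} v_{i} = v_{i}$ for each coordinate $i$ and each $a \in \Fqtb^{\times}$. Next I would argue coordinatewise: if some $v_{i}$ were nonzero, one could cancel it to deduce $a^{m_{i}} = 1$ for every $a \in \Fqtb^{\times}$. Since $m_{i} \neq 0$, the set of $m_{i}$-th roots of unity inside the field $\Fqtb$ has cardinality at most $|m_{i}|$, so this identity would force $\Fqtb^{\times}$ to be finite. The final input is that $\Fqtb^{\times}$ contains an element of infinite multiplicative order, for instance $t$ itself, yielding a contradiction and hence $v_{i} = 0$. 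Doing this for each $i$ gives $v = 0$, so $V(\Fqtb) = \{0\}$.

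The only place any care is needed is the passage from finiteness of the image of the orbit map to constancy of the orbit map, for which I rely on geometric connectedness of $\Gm$ over $\Fq(t)$; no deeper Tannakian or arithmetic input is required. The hypothesis $m_{i} \neq 0$ is used only to ensure that $a \mapsto a^{m_{i}}$ is nonconstant on $\Gm$, and the fact that the base $\Fq(t)$ is transcendental over its prime field is what guarantees the existence of the required element of infinite order in $\Fqtb^{\times}$.
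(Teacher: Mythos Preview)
Your argument is correct. The paper itself offers no proof of this lemma, simply declaring it ``clear, but very useful'', so there is nothing to compare against at the level of strategy. Your route via connectedness of $\Gm$ and constancy of the orbit map is perfectly valid. A marginally shorter variant, which is perhaps what the author had in mind, bypasses the connectedness step: if $v=(v_1,\dots,v_r)\in V(\Fqtb)$ has some $v_i\neq 0$, then the $i$-th coordinate $a^{m_i}v_i$ of $a.v$ already takes infinitely many values as $a$ ranges over $\Fqtb^{\times}$ (since $m_i\neq 0$ and $\Fqtb^{\times}$ contains elements of infinite order, e.g.\ $t$), so the orbit $\{a.v:a\in\Fqtb^{\times}\}\subset V(\Fqtb)$ is infinite, contradicting $\dim V=0$. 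This and your argument are essentially the same observation packaged differently.
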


\begin{proof}[Proof of Theorem \ref{main}]
In this proof, $(i, j)$ and $(k, \ell)$ are always assumed to be elements of the totally ordered set $I_{d}$.
Let $\Phi$ and $\Psi$ be the $(d+1) \times (d+1)$-matrices defined in Example \ref{ex-L}.
These satisfy the Frobenius difference equations (\ref{Fr-eq}).
For $(k, \ell) \in I_{d}$, we define $(\dep(k, \ell) + 1) \times (\dep(k, \ell) + 1)$-matrices
$\Phi[k, \ell] = (\Phi[k, \ell]_{ij})$ and $\Psi[k, \ell] = (\Psi[k, \ell]_{ij})$ which are sub-matrices of $\Phi$ and $\Psi$,
where $\Phi[k, \ell]_{ij} = \Phi_{i+\ell-1, j+\ell-1}$ and $\Psi[k, \ell]_{ij} = \Psi_{i+\ell-1, j+\ell-1}$.
In particular, the lower left corner of $\Phi[k, \ell]$ (resp. $\Psi[k, \ell]$) is the $(k, \ell)$-th entry of $\Phi$ (resp. $\Psi$).
The following is the illustration of the relative positions of the matrices:
\begin{eqnarray*}
\makebox[0pt][c]{\raisebox{-21pt}[0pt][0pt]{ \ \ $(k, \ell)$} \raisebox{-19pt}[0pt][0pt]{\begin{xy} \ar (14, 2)\end{xy}}}
\begin{bmatrix}
\cdot & & & & \\
\circ & \cdot & & & \\
\cdashline{3-4}
\circ & \multicolumn{1}{c:}{\circ} & \cdot & \multicolumn{1}{c:}{} & \\
\circ & \multicolumn{1}{c:}{\circ} & \circ & \multicolumn{1}{c:}{\cdot} & \\
\cdashline{3-4}
\circ & \circ & \circ & \circ & \cdot
\end{bmatrix}
\makebox[0pt][c]{\raisebox{-5pt}[0pt][0pt]{ \ \ \ \ \ \ \ \ \ \ \ \ \ \ \ \ \ \ \ \ \ \ \ \ \begin{xy} \ar (-7, 0) \end{xy}}
\raisebox{-8pt}[0pt][0pt]{$\Phi[k, \ell]$ (resp. $\Psi[k, \ell]$)}}
\makebox[0pt][c]{\raisebox{20pt}[0pt][0pt]{ \ \ \ \ \ \ \ \ \ \ \ \ \ \ \ \ \ \ \ \begin{xy} \ar (-7, 0) \end{xy}}
\raisebox{17pt}[0pt][0pt]{$\Phi$ (resp. $\Psi$)}}
\end{eqnarray*}
Let $M[k, \ell]$ be the pre-$t$-motive defined by $\Phi[k, \ell]$
and $G(k, \ell)$ the fundamental group of the pre-$t$-motive
$$M(k, \ell) := C \oplus \bigoplus_{\substack{\dep(i, j) \geq \dep(k, \ell) - 1 \\ (i, j) \leq (k, \ell)}} M[i, j],$$
where $C$ is the Carlitz pre-$t$-motive (see Example \ref{ex-C}).
The closed circles in the following illustration is the range in which $(i, j)$ runs over in the above direct sum:
\begin{eqnarray*}
\makebox[0pt][c]{\raisebox{-27pt}[0pt][0pt]{ \ \ $(k, \ell)$} \raisebox{-25pt}[0pt][0pt]{\begin{xy} \ar (10, 2)\end{xy} \ \ \ \ }}
\begin{bmatrix}
& & & & & & \\
\cdashline{1-1}
\multicolumn{1}{c:}{\circ} & & & & & & \\
\cdashline{2-2}
\bullet & \multicolumn{1}{c:}{\circ} & & & & & \\
\cdashline{3-3}
\bullet & \bullet & \multicolumn{1}{c:}{\circ} & & & & \\
\cdashline{4-4}
\circ & \bullet & \bullet & \multicolumn{1}{c:}{\circ} & & & \\
\cdashline{5-5}
\circ & \circ & \circ & \bullet & \multicolumn{1}{c:}{\circ} & &
\end{bmatrix}
\end{eqnarray*}
We identify $G(k, \ell)$ with the algebraic group defined by $\begin{bmatrix} \Omega \end{bmatrix} \oplus \bigoplus_{(i, j)} \Psi[i,j]$ as in Theorem \ref{Psi-M}.
Then we have the inclusion
$$G(k, \ell) \subset \left \{ \begin{bmatrix} a \end{bmatrix} \oplus \bigoplus_{\substack{\dep(i, j) \geq \dep(k, \ell) - 1 \\ (i, j) \leq (k, \ell)}}
\begin{bmatrix} a^{n_{j} + \cdots + n_{d}} & & & \\
x_{j+1,j} & a^{n_{j+1} + \cdots + n_{d}} & & \\
\vdots & \ddots & \ddots & \\
x_{ij} & \cdots & x_{i,i-1} & a^{n_{i} + \cdots + n_{d}} \\ \end{bmatrix} \right \}$$
for each $(k, \ell)$.
Note that  some different entries/coordinates of different block matrices may be the same and denoted by same letters;
this means that
for $(i, j), (i', j')$ and $r, r', s, s'$ with $1 \leq s < r \leq \dep(i, j) + 1$ and $1 \leq s' < r' \leq \dep(i', j') + 1$,
if $(r + j - 1, s + j - 1) = (r' + j' - 1, s' + j' - 1)$,
then the $(r, s)$-th entry of the $(i, j)$-th component matrix and the $(r', s')$-th entry of the $(i', j')$-th component matrix are the same
and they are denoted by $x_{r + j - 1, s + j - 1}$.
In fact, since $\Psi$ is a lower triangular matrix, the $(r, s)$-th entry of $\widetilde{\Psi[i,j]}$
is equal to the $(r + j - 1, s + j - 1)$-th entry of $\Psit$ (for the explicit description of $\Psit$, see Example \ref{ex-L}).
Thus if $(r + j - 1, s + j - 1) = (r' + j' - 1, s' + j' - 1)$,
then the $(r, s)$-th entry of $\widetilde{\Psi[i, j]}$ and the $(r', s')$-th entry of $\widetilde{\Psi[i', j']}$ coincide.
Therefore the corresponding positions in the algebraic group $G(k, \ell)$ are the same.

By Theorem \ref{Psi-M}, it suffices to show that the above inclusion is actually an equality for each $(k, \ell)$.
We prove this by induction on $(k, \ell) \in I_{d}$ via the total order ``$\leq$''.

By the assumption, this is true for $(2, 1) \leq (k, \ell) \leq (d+1, d)$, these are the depth one cases.
Let $(k, \ell) \geq (3, 1)$ (this means $\dep(k, \ell) \geq 2$) and assume that the inclusion is an equality for $(k', \ell')$ the greatest element of
$\{ (i, j) \in I_{d} | (i, j) < (k, \ell) \}$,
which means that $(k', \ell') = (k - 1, \ell - 1)$ if $\ell \neq 1$ and $(k', \ell') = (d + 1, d + 3 - k)$ if $\ell = 1$.
By definition, $M(k', \ell')$ is a subobject of $M(k, \ell)$ and $C$ is a subobject of $M(k, \ell)$ and $M(k', \ell')$.
By Tannakian duality, we have surjections
$\psi \colon G(k, \ell) \to G(k', \ell')$, $\pi \colon G(k, \ell) \to \Gm$ and $\pi' \colon G(k', \ell') \to \Gm$,
where we identify $G_{C}$ with $\Gm$.
These are projection maps.
More precisely, $\pi$ and $\pi'$ map the matrices of the above forms to $a$
and $\psi$ maps to the same matrices with the $(k, \ell)$-th component matrices removed.
These follow from the description of the map (\ref{map_G}).
The arguments are as same as in \cite[$\S$6.2.2]{Papa}, \cite[$\S$4.3]{ChYu} and \cite[Remark 2.3.2]{CPY}.
We set $V := \Ker \pi$ and $V' := \Ker \pi'$ to be the unipotent radicals of $G(k, \ell)$ and $G(k', \ell')$.
Then we have the following diagram
\[\xymatrix{
1 \ar[r] & V \ar[r] \ar[d]^{\psi|_{V}} & G(k, \ell) \ar[r]^{\pi} \ar@{>>}[d]^{\psi} & \Gm \ar[r] \ar@{=}[d] & 1 \\
1 \ar[r] & V' \ar[r] & G(k', \ell') \ar[r]^{\pi'} & \Gm \ar[r] & 1 \\
}\]
which is commutative and whose rows are exact.

It is clear that $\psi|_{V}$ is surjective.
Since $V$ is non-commutative, the $G(k, \ell)$-action $A . X := A^{-1} X A$ on $V$ ($X \in V$, $A \in G(k, \ell)$)
depends not only on $\pi(A)$ but also on the other entries of $A$.
Note that the coordinate variable $x_{k \ell}$ of $G(k, \ell)$ is the only coordinate variable which does not appear as a coordinate variable of $G(k', \ell')$.
Thus we know that $\dim G(k', \ell') \leq \dim G(k, \ell) \leq \dim G(k', \ell') + 1$.
This also follows from Theorem \ref{Psi-M} (2).
It suffices to show that the second inequality is an equality.

Now, assume that $\dim G(k, \ell) = \dim G(k', \ell')$.
Then $\dim \Ker (\psi|_{V}) = 0$.
It is clear that $\Ker (\psi|_{V})$ is a normal subgroup of $G(k, \ell)$ and
$A.x_{k \ell} = \pi(A)^{n_{\ell} + \cdots + n_{d}} x_{k \ell}$
for each $x_{k \ell} \in \Ker (\psi|_{V})$ and $A \in G(k, \ell)$,
where we identify $\Ker (\psi|_{V}) \subset \Ga$ by means of the coordinate $x_{k \ell}$.
By Lemma \ref{V=0} we have that $\Ker (\psi|_{V})$ is trivial.
We take any elements
$$X = \begin{bmatrix} 1 \end{bmatrix} \oplus \bigoplus_{\substack{\dep(i, j) \geq \dep(k, \ell) - 1 \\ (i, j) \leq (k, \ell)}}
\begin{bmatrix} 1 & & & \\
x_{j+1, j} & 1 & & \\
\vdots & \ddots & \ddots & \\
x_{i j} & \cdots & x_{i, i-1} & 1 \\ \end{bmatrix} \in V$$
and
\begin{eqnarray*}
A = \begin{bmatrix} 1 \end{bmatrix} &\oplus& \bigoplus_{\dep(i, j) = \dep(k, \ell) - 1}
\begin{bmatrix} 1 & & & \\
& 1 & & \\
& \mbox{\smash{\huge 0}} & \ddots & \\
a_{ij} & & & 1 \\ \end{bmatrix} \\
&\oplus& \bigoplus_{\substack{\dep(i, j) = \dep(k, \ell) \\ (i, j) \leq (k, \ell)}}
\begin{bmatrix} 1 & & & & & \\
& 1 & & & \\
& & \ddots & & \\
a_{i-1,j} & \mbox{\smash{\huge 0}} & & \ddots & & \\
a_{ij} & a_{i,j+1} & &  & 1 \\ \end{bmatrix} \in V,
\end{eqnarray*}
where we can take any $x_{ij} \in \Fqtb$ (resp. any $a_{ij} \in \Fqtb$) for each $(i, j) \in I_{d}$ such that $(i, j) \neq (k, \ell)$
(resp. $\dep(i, j) \geq \dep(k, \ell) - 1$ and $(i, j) < (k, \ell)$)
by the assumption on $(k', \ell')$ and the surjectivity of $\psi|_{V}$.
Then $X^{-1} (A^{-1} X A)$ is equal to
\begin{eqnarray*}
\begin{bmatrix} 1 \end{bmatrix} &\oplus& \bigoplus_{\dep(i, j) = \dep(k, \ell) - 1}
\begin{bmatrix} 1 & & \\
& \ddots & \\
& & 1 \\ \end{bmatrix} \\
&\oplus& \bigoplus_{\substack{\dep(i, j) = \dep(k, \ell) \\ (i, j) \leq (k, \ell)}}
\begin{bmatrix} 1 & & & & & \\
& 1 & & & \\
& & \ddots & & \\
& \mbox{\smash{\huge 0}} & & \ddots & & \\
a_{i-1,j} x_{i,i-1} - a_{i,j+1} x_{j+1,j} & & &  & 1 \\ \end{bmatrix}.
\end{eqnarray*}
Now we take $a_{i j} = 0$ for $(k - \ell, 1) \leq (i,j) < (k, \ell + 1)$ and $a_{k, \ell + 1} = 1$.
Then we see that $X^{-1} (A^{-1} X A) \in \Ker (\psi|_{V}) = \{ 0 \}$ and so we have $x_{\ell+1, \ell} = 0$.
Since $(\ell + 1, \ell) \neq (k, \ell)$, this is a contradiction.
Therefore we have $\dim G(k, \ell) = \dim G(k', \ell') + 1$.
\end{proof}

\appendix
\def\thesection{Appendix \Alph{section}}
\section{}\label{app_proof}
In this appendix, we prove Theorem \ref{alg-dep1}.
In \cite{ChYu}, they treated some special case, but their proof works also for any $n_{i}$ not divisible by $p$ and any $\alpha_{ij} \in \Kb[t]$ with $|\!|\alpha_{i j}|\!|_{\infty} < \niq$.
By a slight modification of their proof, we can weaken the condition on $n_{i}$'s as in our statement.
Note that when $\alpha_{ij} \in \Kb$ or $\alpha_{ij} = H_{n_{i}-1}$, we can reduce Theorem \ref{alg-dep1} to the case where $n_{i}$ is not divisible by $p$,
and we do not need the following proof in such cases.
As in Section \ref{proof}, for an algebraic group $G$ over $\Fq(t)$, when it is clear from the contents, without confusion we still denote by $G$ the $\Fqtb$-valued points of $G$.

\begin{proof}[Proof of Theorem \ref{alg-dep1}]
We set $I := \{ (i, j) \in \Z^{2} | 1 \leq i \leq d, \ 1 \leq j \leq r_{i} \}$.
In this proof, $(i, j)$ and $(k, \ell)$ are always assumed to be elements of $I$.
We define an order on $I$ by the lexicographic order;
this means $(i, j) \leq (k, \ell)$ if and only if $i = k$ and $j \leq \ell$, or $i < k$.
For $(k, \ell) \in I$, we define $2 \times 2$-matrices
$$\Phi[k, \ell] :=
\begin{bmatrix} (t-\theta)^{n_{k}} & 0 \\
\alpha_{k \ell}^{(-1)} (t-\theta)^{n_{k}} & 1 \end{bmatrix}
\ \ \ \mathrm{and} \ \ \
\Psi[k, \ell] :=
\begin{bmatrix} \Omega^{n_{k}} & 0 \\
\Omega^{n_{k}} L_{\alpha_{k \ell}, n_{k}} & 1 \end{bmatrix}.$$
Then they satisfy the Frobenius difference equations $\Psi[k, \ell]^{(-1)} = \Phi[k, \ell] \Psi[k, \ell]$.
Let $M[k, \ell]$ be the pre-$t$-motive defined by $\Phi[k, \ell]$
and $G(k, \ell)$ (resp. $G_{k}(\ell)$) the fundamental group of the pre-$t$-motive
$$M(k, \ell) := C \oplus \bigoplus_{(i, j) \leq (k, \ell)} M[i, j] \ \ \left ( \mathrm{resp. \ } M_{k}(\ell) := C \oplus \bigoplus_{j \leq \ell} M[k, j] \right ).$$
We identify $G(k, \ell)$ (resp. $G_{k}(\ell)$) with the algebraic group defined by $\begin{bmatrix} \Omega \end{bmatrix} \oplus \bigoplus_{(i, j)} \Psi[i,j]$
(resp. $\begin{bmatrix} \Omega \end{bmatrix} \oplus \bigoplus_{j} \Psi[k,j]$) as in Theorem \ref{Psi-M}.
Then we have the inclusion (resp. equality)
$$G(k, \ell) \subset \left \{ \begin{bmatrix} a \end{bmatrix} \oplus \bigoplus_{(i, j) \leq (k, \ell)}
\begin{bmatrix} a^{n_{i}} & 0 \\
x_{i j} & 1 \end{bmatrix} \right \} \ \ \left ( \mathrm{resp. \ } 
G_{k}(\ell) = \left \{ \begin{bmatrix} a \end{bmatrix} \oplus \bigoplus_{j \leq \ell}
\begin{bmatrix} a^{n_{k}} & 0 \\
x_{k j} & 1 \end{bmatrix} \right \} \right )$$
for each $(k, \ell)$.
By Theorem \ref{Psi-M}, it suffices to show that the above inclusion is actually an equality for each $(k, \ell)$.
We prove this by induction on $(k, \ell) \in I$ via the total order ``$\leq$''.

By the assumption, this is true for $(1, 1) \leq (k, \ell) \leq (1, r_{1})$.
Let $(k, \ell) \geq (2, 1)$ and assume that the inclusion is an equality for $(k', \ell')$ the greatest element of $\{ (i, j) \in I | (i, j) < (k, \ell) \}$.
Thus $(k', \ell') = (k, \ell - 1)$ if $\ell \neq 1$ and $(k', \ell') = (k-1, r_{k-1})$ if $\ell = 1$.
By definition, $M(k', \ell')$ and $M_{k}(\ell)$ are subobjects of $M(k, \ell)$ and $C$ is a subobject of $M(k, \ell)$, $M(k', \ell')$ and $M_{k}(\ell)$.
By the Tannakian duality, we have surjections
$\psi \colon G(k, \ell) \to G(k', \ell')$, $\psi_{k} \colon G(k, \ell) \to G_{k}(\ell)$, $\pi \colon G(k, \ell) \to \Gm$, $\pi' \colon G(k', \ell') \to \Gm$
and $\pi'' \colon G_{k}(\ell) \to \Gm$, where we identify $G_{C}$ with $\Gm$.
The projections $\pi$, $\pi'$ and $\pi''$ map the matrices of the above forms to $a$
and $\psi$ (resp. $\psi_{k}$) maps to the same matrices with the $(k, \ell)$-th component matrices (resp. all $(i, j)$-th component matrices ($i \neq k$)) removed.
We set $V := \Ker \pi$, $V' := \Ker \pi'$ and $V'' := \Ker \pi''$ to be the unipotent radicals of $G(k, \ell)$, $G(k', \ell')$ and $G_{k}(\ell)$.
Then we have the following diagram
\[\xymatrix{
1 \ar[r] & V'' \ar[r] & G_{k}(\ell) \ar[r]^{\pi''} & \Gm \ar[r] & 1 \\
1 \ar[r] & V \ar[r] \ar[d]^{\psi|_{V}} \ar[u]_{\psi_{k}|_{V}} & G(k, \ell) \ar[r]^{\pi} \ar@{>>}[d]^{\psi} \ar@{>>}[u]_{\psi_{k}} & \Gm \ar[r] \ar@{=}[d] \ar@{=}[u] & 1 \\
1 \ar[r] & V' \ar[r] & G(k', \ell') \ar[r]^{\pi'} & \Gm \ar[r] & 1 \\
}\]
which is commutative and whose rows are exact.

It is clear that $\psi|_{V}$ is surjective.
Note that the coordinate variable $x_{k \ell}$ of $G(k, \ell)$ is the only coordinate variable which does not appear as a coordinate variable of $G(k', \ell')$.
Thus we know that $\dim G(k', \ell') \leq \dim G(k, \ell) \leq \dim G(k', \ell') + 1$.
This also follows from Theorem \ref{Psi-M} (2).
It suffices to show that the second inequality is an equality.

Now, assume that $\dim G(k, \ell) = \dim G(k', \ell')$.
Then $\dim \Ker (\psi|_{V}) = 0$.
We identify
$V \subset \prod_{(i, j) \leq (k, \ell)} \Ga$, $V' = \prod_{(i, j) < (k, \ell)} \Ga$ and $V'' = \prod_{j \leq \ell} \Ga$ by means of the coordinates $x_{ij}$.
The $\Gm$-action on $V$ (resp. $V'$, resp. $V''$) defined by $a.X := \widetilde{a}^{-1} X \widetilde{a}$,
where $\widetilde{a} \in G(k, \ell)$ (resp. $G(k', \ell')$, resp. $G_{k}(\ell)$) is a lift of $a \in \Gm$,
is described as $x_{ij} \mapsto a^{n_{i}} x_{ij}$ on each coordinate.
By Lemma \ref{V=0} we have $\Ker (\psi|_{V}) = 1$.
Thus the morphism $\psi|_{V}$ is bijective (but not necessary an isomorphism of varieties)
and we have the surjective map
\[\xymatrix{
\psi_{k}|_{V} \circ \psi|_{V}^{-1} \colon V' & V \ar[l]_{\ \ \ \ \ \ \ \ \ \ \sim} \ar@{>>}[r] & V''. \\
}\]

For each $(i,j) \neq (k, \ell)$, we set $V_{i j}$ (resp. $V'_{i j}$) to be the subvariety of $V$ (resp. $V'$) defined by $x_{i' j'} = 0$ for each $(i', j') \neq (i, j), (k, \ell)$.
Then $\psi|_{V_{ij}} \colon V_{ij} \to V'_{ij} = \Ga$ is a bijective $\Gm$-homomorphism.
Thus we have $\dim V_{ij} = 1$.
Hence the algebraic set\footnote{More precisely, the smooth algebraic group $(V_{ij} \otimes \Fqtb)_{\red}$ is defined by such polynomial.}
$V_{ij}$ is defined by a separable polynomial of the form $x_{k \ell}^{p^{e}} - \sum_{n=0}^{m} b_{n} x_{ij}^{p^{n}}$
for some $e, m \geq 0$ and $b_{n} \in \Fqtb$ (See \cite[Corollary 1.8]{Conr}).
Now we take $i \neq k$ and assume that the $\Gm$-homomorphism $\psi_{k}|_{V_{ij}} \circ \psi|_{V_{ij}}^{-1}$ is non-zero.
Then we can take $b_{m} \neq 0$ and we have $(\sum_{n} b_{n} (a^{n_{i}} x_{ij})^{p^{n}})^{p^{-e}} = a^{n_{k}} (\sum_{n} b_{n} x_{ij}^{p^{n}})^{p^{-e}}$ for each $a \in \Gm$.
By comparing the coefficients of $x_{ij}^{p^{m-e}}$, we have $n_{i} p^{m-e} = n_{k}$, which is a contradiction.
Thus we conclude that $\psi_{d}|_{V_{ij}} \circ \psi|_{V_{ij}}^{-1} = 0$.
Therefore we have $\psi_{k}|_{V}(\psi|_{V}^{-1}(\Ga^{\ell-1})) = V''$, whence a contradiction since $\dim V'' = \ell$.
\end{proof}

\end{document}